\def\newaliasedtheorem#1[#2]#3{
  \newaliascnt{#1@alt}{#2}
  \newtheorem{#1}[#1@alt]{#3}
  \expandafter\newcommand\csname #1@altname\endcsname{#3}
}
\numberwithin{equation}{section}
\newtheoremstyle{slanted}{\topsep}{\topsep}{\slshape}{}{\bfseries}{.}{.5em}{}
\theoremstyle{plain}
\newtheorem{theorem}{Theorem}[section]
\theoremstyle{definition}
\theoremstyle{remark}
\let\altphi\phi
\let\phi\varphi
\let\varphi\altphi
\let\altphi\undefined
\newcommand{\di}{\mathop{}\!\mathrm{d}}
\newcommand{\res}{\mathop{\hbox{\vrule height 7pt width .5pt depth 0pt
\vrule height .5pt width 6pt depth 0pt}}\nolimits}
\DeclareMathOperator{\supp}{supp}
\newcommand{\Ch}{{\sf Ch}}
\DeclareMathOperator{\Lip}{Lip}
\newcommand{\dist}{\mathsf{d}}
\newcommand{\meas}{\mathfrak{m}}
\DeclareMathOperator{\RCD}{RCD}
\DeclareMathOperator{\BE}{BE}
\newfont{\tmpf}{cmsy10 scaled 2500}
\begin{document}
\title{Bakry-\'Emery conditions on almost smooth metric measure spaces}
\author{Shouhei Honda
\thanks{Tohoku University, \url{shonda@m.tohoku.ac.jp}}} 
\maketitle

\begin{abstract}In this short note, we give a sufficient condition for almost smooth compact metric measure spaces to satisfy the Bakry-\'Emery condition $\BE (K, N)$. 
The sufficient condition is satisfied for the glued space of any two (not necessary same dimensional) closed pointed Riemannian manifolds at their base points. This tells us that the $\BE$ condition is strictly weaker than the $\RCD$ condition even in this setting, and that the local dimension is not constant even if the space satisfies the $\BE$ condition with the coincidence between the induced distance by the Cheeger energy and the original distance. 
In particular, the glued space gives a first example with a Ricci bound from below in the Bakry-\'Emery sense, whose local dimension is not constant. We also give a necessary and sufficient condition for such spaces to be $\RCD(K, N)$ spaces.
\end{abstract}

\tableofcontents

\section{Introduction}

Let $(X, \dist, \meas)$ be a compact metric measure space, that is, $(X, \dist)$ is a compact metric space with $\supp \meas =X$ and $\meas (X)<\infty$. 
There are several definitions of `lower Ricci bounds on $(X, \dist, \meas)$', whose studies are very quickly, widely developed now.
We refer to, \cite{LottVillani} by Lott-Villani, \cite{Sturm06} by Sturm, and \cite{AmbrosioGigliSavare14} by Ambrosio-Gigli-Savar\'e, as their pioneer works.

In this paper, we forcus on two of them. One of them is the \textit{Bakry-\'Emery $\mathrm{(}\BE\mathrm{)}$ condition}  \cite{BE} by Bakry-\'Emery, denoted by $\BE(K, N)$,  the other is the \textit{Riemannian curvature dimension $\mathrm{(}\RCD\mathrm{)}$ condition} \cite{AmbrosioGigliSavare14} by Ambrosio-Gigli-Savar\'e (in the case when $N=\infty$), \cite{Gigli0} by Gigli (in the case when $N<\infty$), denoted by $\RCD(K, N)$.
Both notions give us meanings that the Ricci curvature of $(X, \dist, \meas)$ is bounded below by $K$, and the dimension of $(X, \dist, \meas)$ is bounded above by $N$ in synthetic sense.
The $\BE(K, N)$ condition is roughly stated by:
\begin{equation}\label{0}
\frac{1}{2}\Delta |\nabla f|^2 \ge \frac{(\Delta f)^2}{N}+\langle \nabla \Delta f, \nabla f\rangle +K|\nabla f|^2
\end{equation}
holds in a weak form for all `nice' functions $f$ on $X$ (Definition \ref{bedef}).
It is known that if $(X, \dist)$ is a $n$-dimensional smooth Riemannian manifold $(M^n, g)$ and $\meas$ is the Riemannian (or equivalently, the Hausdorff) measure, then, the $\BE(K, N)$ condition (\ref{0}) is equivalent to satisfying $n\le N$ and $\mathrm{Ric}_{M^n}^g \ge K$, and that the $\BE(K, N)$ condition is also equivalent to some gradient estimates on the heat flow, so called Bakry-\'Emery/Bakry-Ledoux gradient estimates.

In general, the implication from $\RCD(K, N)$ to $\BE(K, N)$ is always satisfied. The converse is true under adding a some property, so-called the `\textit{Sobolev to Lipschitz property}', which is introduced in \cite{Gigli0} (Definition \ref{def:RCDspaces}).
This property (with $\BE$), in a point of view in geometric analysis, plays a role to get the coincidence between the analytic distance $\dist_{\Ch}$ (that is, the induced distance by the Cheeger energy) and the original (geometric) distance $\dist$. 
Moreover, the $\RCD$ condition also implies the Sobolev to Lipschitz property. Thus, the following equivalence is known:
\begin{equation}\label{444444}
\RCD(K, N) \Longleftrightarrow \BE(K, N) + `\mathrm{Sobolev\,to\, Lipschitz\,property}'.
\end{equation}

The RHS of (\ref{444444}) is also called the metric $\BE(K, N)$ condition.
Thus, to keep the short presentation, we adopt the RHS of (\ref{444444}) as the definition of $\RCD(K, N)$ condition in this paper (Definition \ref{def:RCDspaces}).
We refer to, \cite{AmbrosioGigliSavare15} by Ambrosio-Gigli-Savar\'e, \cite{AmbrosioMondinoSavare}, \cite{AmbrosioMondinoSavare16} by Ambrosio-Mondino-Savar\'e, and \cite{ErbarKuwadaSturm} by Erbar-Kuwada-Sturm for the details.

In these observation, more precisely, `$\RCD$' should be replaced by `$\RCD^*$'. However, since the equivalence between $\RCD$ and $\RCD^*$ spaces is also recently established in \cite{CavMil} by Cavalletti-Milman, we use the notation `$\RCD$' only for simplicity.

In this paper, we discuss the condition:
\begin{equation}\label{22}
\BE(K, N) + `\dist_{\Ch}=\dist'.
\end{equation}

One of the goals in this paper is to provide an example satisfying (\ref{22}), but it is not an $\RCD$ space.
More precisely, for any two (not necessary same dimensional) closed pointed Riemannian manifolds $(M_i^{m_i}, g_i,  p_i) (m_i \ge 2)$, the glued metric space $M_1^{m_1} * M_2^{m_2}$ at their base points with the standard measure is a $\BE (K, \max \{m_1, m_2\})$ space, where $K:= \min \{ \inf \mathrm{Ric}_{M_1^{m_1}}^{g_1}, \inf \mathrm{Ric}_{M_2^{m_2}}^{g_2}\}$ (Example \ref{1091}). It is easy to check that this metric measure space does not satisfy the Sobolev to Lipschitz property, thus, it is not a $\RCD(L, \infty)$ space for any $L \in \mathbb{R}$. 

This tells us that (\ref{22}) does \textit{not} imply the expected Bishop-Gromov inequality (Remark \ref{yy6}), and that (\ref{22}) does \textit{not} imply the constancy of the local dimension.
In particular, the glued space gives a first example with a Ricci bound from below in the Bakry-\'Emery sense, whose local dimension is not constant.
We point out a very recent result in \cite{BS18} by Bru\`e-Semola, which states that for any $\RCD(K, N)$ space, there exists a unique $k$ such that the $k$-dimensional regular set $\mathcal{R}_k$ has positive measure.
This generalizes a result of Colding-Naber in \cite{CN} for Ricci limit spaces to $\RCD$ spaces.
Thus, we know that the Sobolev-Lipschitz property is crucial to get the constant dimensional property.
Note that in \cite{KR}, Ketterer-Rajala constructed a metric measure space with the measure contraction property (MCP), which also characterize `Ricci bounds from below' in a synthetic sense, but the local dimension is not constant. Therefore, in general, MCP and BE spaces are very diferent from $\RCD$ spaces.

Moreover, we should pay attention to a similar sufficient condition in \cite{AmbrosioMondinoSavare16} by Ambrosio-Mondino-Savar\'e, so-called the local to global property, which states in our compact setting; if $(X, \dist)$ is a geodesic (or equivalently, length) space and there exists an open covering $\{U_i\}_{i \in I}$ of $X$ such that $U_i \neq \emptyset$ and that $(\overline{U_i}, \dist, \meas_{\overline{U_i}})$ satisfies the metric $\BE(K, N)$ condition, then, $(X, \dist, \meas)$ satisfies the metric $\BE(K, N)$ condition. In fact, the glued example shows that the openness of $U_i$ is essential because although $U_i:=M_i^{m_i}$ in $M_1^{m_1}*M_2^{m_2}$ satisfies the assumptions except for their openness properties, but the glued space does not satisfy the metric $\BE(K, N)$ condition for all $K, N$.

In order to justify these, we study \textit{almost smooth compact metric measure spaces}. See Definition \ref{def:asmm} for the definition, which allows us such spaces to have at least the codimension $2$ singularities. Thus, compact (Riemannian) orbifolds with the Hausdorff measure are typical examples of them. Then, the main result in this paper is roughly stated as follows; if an almost smooth compact metric measure space satisfies the $L^2$-strong compactness condition and satisfies the gradient estimates on the eigenfunctions, then, a lower bound of the Ricci tensor of the smooth part implies a $\BE$ condition (Theorem \ref{be}).
By using this, we can give a necessary and sufficient condition for such a space to be a $\RCD$ space (Corollary \ref{corrcd}).

The organization of the paper is as follows.

In section $2$, to keep the short presentation, we give a very quick introduction to calculus on metric measure spaces.

In section $3$, we study our main targets, almost smooth metric measure spaces, and prove the main results.

\textbf{Acknowledgement.}
A part of the work is done during the author's stay in Yau Mathematical Sciences Center (YMSC) at Tsinghua University.
The author would like to express his appreciation to Guoyi Xu for his warm hospitality.
He is also grateful to YMSC for giving him nice environment.
He thanks Luigi Ambrosio, Nicola Gigli, Bangxian Han and Aaron Naber for helpful comments.
Moreover, He thanks the referee for the careful reading of the manuscript and for the suggestions in the revision. 
Finally, he acknowledges the supports of the Grantin-Aid
for Young Scientists (B), 16K17585, and of the
Grant-in-Aid for Scientific Research (B), 18H01118.
\section{$\BE$ and $\RCD$ spaces}

We use the notation $B_r(x)$ for open balls and $\overline{B}_r(x)$ for $\{y:\ \dist(x,y)\leq r\}$.
We also use the standard notation $\mathrm{LIP}(X,\dist)$, $\mathrm{LIP}_c(X,\dist)$ for the
spaces of Lipschitz, compactly supported Lipschitz functions, respectively. 

Let us now recall basic facts about Sobolev spaces in metric measure spaces 
$(X,\dist,\meas)$, see \cite{AmbrosioGigliSavare13}, \cite{Gigli1} and \cite{Gigli} for a more systematic treatment of this topic. 
We shall always assume that 
\begin{itemize}
\item the metric space $(X,\dist)$ is compact with $\supp \meas =X$ and $\meas (X)<\infty$
\end{itemize}
for simplicity. 

The Cheeger energy
$\Ch=\Ch_{\dist,\meas}:L^2(X,\meas)\to [0,+\infty]$ is a convex and $L^2(X,\meas)$-lower semicontinuous functional defined as follows:
\begin{equation}\label{eq:defchp}
\Ch(f):=\inf\left\{\liminf_{n\to\infty}\frac 12\int_X(\mathrm{Lip}  f_n)^2\di\meas:\ \text{$f_n\in\Lip (X,\dist)$, $\|f_n-f\|_{L^2}\to 0$}\right\}, 
\end{equation}
where $\mathrm{Lip} f$ is the so-called slope, or local Lipschitz constant.

The Sobolev space $H^{1,2}(X,\dist,\meas)$ then concides with $\{f:\ \Ch(f)<+\infty\}$. When endowed with the norm
$$
\|f\|_{H^{1,2}}:=\left(\|f\|_{L^2(X,\meas)}^2+2\Ch(f)\right)^{1/2}
$$
this space is Banach, reflexive if $(X,\dist)$ is doubling (see \cite{AmbrosioColomboDiMarino}), and  
separable Hilbert if $\Ch$ is a quadratic form (see \cite{AmbrosioGigliSavare14}). 
According to the terminology introduced in \cite{Gigli1}, we say that a 
metric measure space $(X,\dist,\meas)$ is infinitesimally Hilbertian if $\Ch$ is a quadratic form.
  
By looking at minimal relaxed slopes and by a polarization procedure, one can then define a {\it carr\'e du champ}
$$
\Gamma:H^{1,2}(X,\dist,\meas)\times H^{1,2}(X,\dist,\meas)\rightarrow L^1(X,\meas)
$$
playing in this abstract theory the role of the scalar product between gradients (more precisely,
the duality between differentials and gradients, see \cite{Gigli1}). In infinitesimally Hilbertian metric measure
spaces, the $\Gamma$ operator
satisfies all natural symmetry, bilinearity, locality and chain rule properties, and provides integral representation to
$\Ch$: $2\Ch(f)=\int_X \Gamma(f,f)\,\dist\meas$ for all $f\in H^{1,2}(X,\dist,\meas)$. 

We can now define a densely
defined operator $\Delta:D(\Delta)\to L^2(X,\meas)$ whose domain consists of all functions $f\in H^{1,2}(X,\dist,\meas)$
satisfying
$$
 \int_X hg\dist\meas=-\int_X \Gamma(f,h)\dist\meas\quad\qquad\forall h\in H^{1,2}(X,\dist,\meas)
$$
for some $g\in L^2(X,\meas)$. The unique $g$ with this property is then denoted by $\Delta f$ (see \cite{AmbrosioGigliSavare13}).

From the point of view of Riemannian geometry, we will also adopt the following notaion instead of $\Gamma$;
$$
\langle \nabla f, \nabla g \rangle :=\Gamma (f, g), \, \quad |\nabla f|^2:=\Gamma (f, f). 
$$

We are now in a position to introduce the $\BE(K, N)$ condition (see \cite{AmbrosioMondinoSavare}, \cite{AmbrosioMondinoSavare16} and \cite{ErbarKuwadaSturm}):
\begin{definition}[$\BE$ spaces]\label{bedef}
Let $(X, \dist, \meas)$ be a compact metric measure space, let $K \in \mathbb{R}$ and let $N \in [1, \infty]$.
We say that $(X, \dist, \meas)$ is a \textit{$\BE (K, N)$ space} if 
for all $f\in D(\Delta)$ with $\Delta f\in H^{1,2}(X,\dist,\meas)$, 
Bochner's inequality
$$
\frac 12\Delta |\nabla f|^2 \geq \frac{(\Delta f)^2}{N} + \langle \nabla f,\nabla \Delta f\rangle + K|\nabla f|^2 
$$
holds in the weak form, that is, 
\begin{equation}\label{eq:boch}
\frac 12\int_X |\nabla f|^2\Delta\phi\dist\meas\geq
\int_X\phi\left(\frac{(\Delta f)^2}{N}+ \langle \nabla f,\nabla \Delta f\rangle + K|\nabla f|^2\right)\dist\meas 
\end{equation}
for all $\phi\in D(\Delta) \cap L^{\infty}(X, \meas)$ with $\phi\geq 0$ and $\Delta\phi\in L^\infty(X,\meas)$.
\end{definition}

In order to introduce the class of $\RCD(K,N)$ metric measure spaces, we follow the
$\Gamma$-calculus point of view, based on Bochner's inequality, because this is the point of view more relevant
in our proofs. However, the equivalence with the Lagrangian point
of view, based on the theory of optimal transport first proved in \cite{AmbrosioGigliSavare15} (in the case $N=\infty$) and then in
\cite{ErbarKuwadaSturm}, \cite{AmbrosioMondinoSavare} (in the case $N<\infty$).
Moreover, the following definition should be written as $\RCD^*(K, N)$ spaces. However, since it is known by \cite{CavMil} that these are equivalent notions, we use the notation $\RCD(K, N)$ only for simplicity.

\begin{definition} [$\RCD$ spaces]\label{def:RCDspaces} Let $(X,\dist,\meas)$ be a compact metric measure space, let $K \in \mathbb{R}$ and let $N \in [1, \infty]$.
We say that $(X, \dist, \meas)$ is a \textit{$\RCD(K, N)$ space} if it is a $\BE (K, N)$ space with the Sobolev-Lipschitz property, that is, 
\begin{itemize}
\item{(Sobolev to Lipschitz property)} any  
$f\in H^{1,2}(X,\dist,\meas)$ with $|\nabla f| \leq 1$ $\meas$-a.e. in $X$ 
has a $1$-Lipschitz representative.
\end{itemize}
\end{definition}

We end this section by giving the definition of local Sobolev spaces:
\begin{definition}[Sobolev spaces $H^{1,2}_0$]\label{def:loc sob}
Let $U$ be an open subset of $X$.
We denote by $H^{1,2}_0(U, \dist, \meas )$ the 
$H^{1,2}$-closure of $\mathrm{LIP}_c(U, \dist)$.
\end{definition}

In the next section, the local Sobolev spaces will play a role to localize global Sobolev functions to smooth parts via the zero capacity condition.
\section{Almost smooth metric measure space}
Let us fix a compact metric measure space $(X, \dist, \meas)$.
\subsection{Constant dimensional case}
\begin{definition}[$n$-dimensional almost smooth compact metric measure space]\label{def:asmm}
Let $n \in \mathbb{N}$. We say that $(X, \dist, \meas)$ is an \textit{$n$-dimensional almost smooth compact metric measure space associated with an open subset $\Omega$ of $X$} if the following three conditions are satisfied;
\begin{enumerate}
\item{(Smoothness of $\Omega$)} there exist an $n$-dimensional (possibly incomplete) Riemannian manifold $(M^n, g)$ and a map $\phi: \Omega \to M^n$ such that $\phi$ is a local isometry between $(\Omega, \dist)$ and $(M^n, \dist_g)$, that is, for all $p \in \Omega$ there exists an open neighborhood $U \subset \Omega$ of $p$ such that $\phi|_U$ is an isometry from $U$ to $\phi(U)$ as metric spaces;
\item{(Hausdorff measure condition)} The restricition $\meas \res_{\Omega}$ of $\meas$ to $\Omega$ coincides with the $n$-dimensional Hausdorff measure $\mathcal{H}^n$ on $\Omega$, that is, $\meas (A)=\mathcal{H}^n(A)$ holds for all Borel subset $A$ of $\Omega$;
\item{(Zero capacity condition)} $X \setminus \Omega$ has zero capacity in the following sense, that is, $\meas (X \setminus \Omega)=0$ is satisfied, there exists a sequence $\phi_i \in C^{\infty}_c(\Omega)$ such that the following two conditions hold;
\begin{enumerate}
\item for any compact subset $A \subset \Omega$, $\phi_i|_A \equiv 1$ holds for all sufficiently large $i$; 
\item it holds that $0 \le \phi_i \le 1$ and that
\begin{equation}\label{uuh}
\sup_i\int_{\Omega} |\Delta \phi_i | \dist \mathcal{H}^n<\infty.
\end{equation}
\end{enumerate}
\end{enumerate}
\end{definition}
The zero capacity condition is a kind of that `$H^{1, 2}$-capacity of $X \setminus \Omega$ is zero' whose
 standard definition is given by replacing (\ref{uuh}) by
\begin{equation}\label{ffff}
\int_{\Omega}|\nabla \phi_i|^2\dist \mathcal{H}^n \to 0 \quad (i \to \infty).
\end{equation}
See \cite{JM}. In particular, (\ref{ffff}) is satisfied if $\|\Delta \phi_i\|_{L^1} \to 0$. Compare with (2) of Proposition \ref{fun}.
\begin{remark}\label{3332}
Whenever we discuss `analysis/geometry on $\Omega$ locally', we can identify $(\Omega, \dist)$ with the smooth Riemannian manifold $(M^n, g)$ (thus, sometimes, we will use the notations $(\Omega, g), \mathrm{Ric}_{\Omega}^g$ and so on).
Note that for all $p \in M^n$ and all sufficiently small $r>0$, $B_r^g(p)$ is convex and it has a uniform lower bound on Ricci curvature.
In particular, the volume doubling condition and the Poincar\'e inequality hold locally.
Thus, Cheeger's theory \cite{Cheeger} can be applied locally.  
In particular, the \textit{Lipschitz-Lusin property} holds for all $f \in H^{1, 2}(X, \dist, \meas)$ (this notion is equivalent to that of  differentiability of functions introduced in \cite{Honda4}), that is, for all $\epsilon>0$, there exists a Borel subset $A $ of $\Omega$ such that $\meas (\Omega \setminus A)<\epsilon$ and that $f|_A$ is Lipschitz.
Combining this with the locality property of the slope on both theories in \cite{AmbrosioGigliSavare13}, in \cite{Cheeger}, yields
\begin{equation}\label{ppk}
|\nabla f|(x) = |\nabla^g (f \circ \phi^{-1})|(\phi(x)) \quad \mathcal{H}^n-a.e. x \in \Omega,
\end{equation}
where the RHS means the minimal weak upper gradient in \cite{Cheeger}.

Let us give a quick proof of (\ref{ppk}) for reader's convenience. By the Lipschitz-Lusin property with the localities of slopes as mentioned above, it suffices to check that under assuming $f \in \mathrm{LIP}(X, \dist)$, the LHS of (\ref{ppk}) is equal to $\mathrm{Lip} f$ for $\meas$-a.e. $x \in X$. Moreover, since it follows from \cite{AmbrosioGigliSavare13} that $|\nabla f|(x) \le \mathrm{Lip} f(x)$ $\meas$-a.e. $x \in X$, let us check the converse inequality.

Let $x \in \Omega$ and fix any sufficiently small $r>0$ as above.
Note that by \cite{Cheeger}, if $f_i \in \mathrm{LIP}(B_r(x), \dist)$ $L^2$-strongly converge to $f$ on $B_r(x)$, then
\begin{equation}\label{eerf}
\liminf_{i \to \infty}\int_{B_r(x)}(\mathrm{Lip} f_i)^2\dist \mathcal{H}^n \ge \int_{B_r(x)}(\mathrm{Lip} f)^2\dist \mathcal{H}^n.
\end{equation}
On the other hand, by \cite{AmbrosioGigliSavare14}, there exists a sequence $F_i \in \mathrm{LIP}(X, \dist)$ such that $F_i, \mathrm{Lip} F_i \to f, |\nabla f|$ in $L^2(X, \meas)$, respectively. Applying (\ref{eerf}) for $f_i=F_i$ shows
$$
\int_{B_r(x)}|\nabla f|^2\dist \mathcal{H}^n\ge \int_{B_r(x)}(\mathrm{Lip} f)^2\dist \mathcal{H}^n.
$$
Since $r$ is arbitrary, we have the converse inequality, $|\nabla f|(x) \ge \mathrm{Lip} f(x)$ $\meas$-a.e. $x \in X$, which completes the proof.
 
Similarly, the Sobolev space $H^{1, 2}_0(M^n, g, \mathcal{H}^n)$, which is defined by the standard way in Riemannian geometry (that is, the $H^{1, 2}$-closure of $C^{\infty}_c(M^n)$), coincides with $H^{1, 2}_0(\Omega, \dist, \meas)$. We will immediately use these compatibilities below.
\end{remark}
From now on, we use the same notation as in Definition \ref{def:asmm} (e.g. $\Omega, \phi_i$) without any attention.
\begin{proposition}\label{fun}
Let $(X, \dist, \meas)$ be an $n$-dimensional almost smooth compact metric measure space.
Then 
\begin{enumerate}
\item $\phi_i \to 1$ in $L^1(X, \meas)$ with $\sup_i \|\phi_i\|_{H^{1, 2}}<\infty$;
\item the canonical inclusion map $\iota: H^{1, 2}_0(\Omega, \dist, \mathcal{H}^n) \hookrightarrow H^{1, 2}(X, \dist, \mathcal{H}^n)$ is an isometry. In particular $(X, \dist, \mathcal{H}^n)$ is infinitesimally Hilbertian.
\end{enumerate}
\end{proposition}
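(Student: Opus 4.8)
The plan is to prove the two assertions in turn, with the uniform energy bound of part (1) feeding the surjectivity argument in part (2). For part (1) I would first establish $\meas$-a.e.\ convergence $\phi_i \to 1$. Since $\Omega$ is a smooth manifold it is $\sigma$-compact, so I fix a compact exhaustion $A_1 \subset A_2 \subset \cdots$ with $\bigcup_k A_k = \Omega$; by property (a) of the zero capacity condition, $\phi_i \equiv 1$ on each $A_k$ for all large $i$, hence $\phi_i \to 1$ pointwise on $\Omega$, thus $\meas$-a.e.\ on $X$ because $\meas(X \setminus \Omega) = 0$. As $0 \le \phi_i \le 1$ and $\meas(X) < \infty$, dominated convergence gives $\phi_i \to 1$ in $L^1(X,\meas)$. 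For the uniform $H^{1,2}$-bound the $L^2$-part is immediate, $\|\phi_i\|_{L^2}^2 \le \meas(X)$, while for the energy I integrate by parts on the Riemannian manifold $\Omega$: since $\phi_i \in C^\infty_c(\Omega)$, $\int_\Omega |\nabla^g \phi_i|^2 \di\mathcal{H}^n = -\int_\Omega \phi_i \Delta \phi_i \di\mathcal{H}^n \le \int_\Omega |\Delta \phi_i|\di\mathcal{H}^n$, which is bounded uniformly by (\ref{uuh}). Identifying $|\nabla^g \phi_i|$ with the metric-measure slope via (\ref{ppk}) and using $\meas(X\setminus\Omega)=0$ converts this into a uniform bound on $2\Ch(\phi_i)$, giving $\sup_i \|\phi_i\|_{H^{1,2}} < \infty$.

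For part (2) I would first check that $\iota$ is an isometric embedding: by Remark \ref{3332}, $H^{1,2}_0(\Omega,\dist,\meas)$ coincides with the Riemannian space $H^{1,2}_0(M^n,g,\mathcal{H}^n)$, and for $u$ therein the extension by zero has the same $L^2$-norm (as $\meas(X\setminus\Omega)=0$) and, by (\ref{ppk}), the same Cheeger energy as $u$, so $\|\iota(u)\|_{H^{1,2}(X)} = \|u\|_{H^{1,2}_0(\Omega)}$. The substance is surjectivity, i.e.\ that the closed subspace $\iota(H^{1,2}_0(\Omega))$ is all of $H^{1,2}(X)$. Since $\mathrm{LIP}(X,\dist)$ is dense in $H^{1,2}(X)$ it suffices to show every $f \in \mathrm{LIP}(X,\dist)$ lies in the image. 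Such $f$ is bounded ($X$ compact), so $\phi_i f \in \mathrm{LIP}_c(\Omega) \subset H^{1,2}_0(\Omega)$, and the Riemannian Leibniz rule on $\Omega$ together with part (1) yields
\begin{equation*}
\int_X |\nabla(\phi_i f)|^2 \di\meas \le 2\int_\Omega \phi_i^2 |\nabla^g f|^2 \di\mathcal{H}^n + 2\|f\|_{L^\infty}^2 \int_\Omega |\nabla^g \phi_i|^2\di\mathcal{H}^n,
\end{equation*}
which is bounded in $i$. Thus $\{\phi_i f\}$ is bounded in the Hilbert space $H^{1,2}_0(\Omega)$, so a subsequence converges weakly there to some $u$; since $\phi_i f \to f$ strongly in $L^2$ (again $\phi_i \to 1$ and dominated convergence) and weak $H^{1,2}_0(\Omega)$-convergence implies weak $L^2$-convergence, uniqueness of weak limits gives $u = f$. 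Hence $f = \iota(u) \in \iota(H^{1,2}_0(\Omega))$. Closedness of the image plus density of Lipschitz functions then force $\iota(H^{1,2}_0(\Omega)) = H^{1,2}(X)$; as the former is Hilbert, being $H^{1,2}_0(M^n,g)$, so is $H^{1,2}(X)$, i.e.\ $\Ch$ is quadratic and $(X,\dist,\mathcal{H}^n)$ is infinitesimally Hilbertian.

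The main obstacle, and the reason the argument is arranged this way, is that the hypothesis only provides the uniform bound $\sup_i \int_\Omega |\Delta\phi_i|\di\mathcal{H}^n < \infty$ rather than the stronger capacity decay $\int_\Omega |\nabla\phi_i|^2 \to 0$ of (\ref{ffff}). Consequently one cannot expect the strong convergence $\phi_i f \to f$ in $H^{1,2}(X)$ that the classical zero-capacity approach would deliver, since the cross term $f\,\nabla\phi_i$ need not vanish in $L^2$. The key device is therefore to replace strong convergence by weak compactness in the Hilbert space $H^{1,2}_0(\Omega)$, for which the mere uniform energy bound from part (1) suffices, and to pin down the weak limit through the independent strong $L^2$-convergence. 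I would be careful only with the routine points: the validity of the exhaustion and of the integration by parts and Leibniz computations on $\Omega$ through the local isometry, and the slope identification (\ref{ppk}).
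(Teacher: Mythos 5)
Your part (1) and the first half of part (2) follow the paper's proof essentially verbatim: the dominated convergence argument, the integration by parts $\int_\Omega |\nabla \phi_i|^2 \di\mathcal{H}^n \le \int_\Omega |\Delta\phi_i|\di\mathcal{H}^n$, the truncation $\phi_i f$ for Lipschitz $f$ with the Leibniz bound, and the weak-compactness-in-$H^{1,2}_0(\Omega)$ argument pinned down by strong $L^2$ convergence (the paper phrases this via Mazur's lemma) are all the same. The gap is in your concluding step. You reduce to Lipschitz functions by asserting that $\mathrm{LIP}(X,\dist)$ is dense in $H^{1,2}(X,\dist,\meas)$, and you then conclude via ``closedness of the image plus density.'' For that deduction you need density in the \emph{strong} $H^{1,2}$ norm. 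But in a general compact metric measure space this is not available a priori: the relaxation definition (\ref{eq:defchp}) of the Cheeger energy (i.e.\ \cite{AmbrosioGigliSavare13}) only provides \emph{density in energy} --- for each $F \in H^{1,2}(X,\dist,\meas)$ there are Lipschitz $F_i \to F$ in $L^2$ with $\Ch(F_i) \to \Ch(F)$ --- and this does not upgrade to $\|F_i - F\|_{H^{1,2}} \to 0$ without some convexity of the norm. Strong density of Lipschitz functions is a theorem of \cite{AmbrosioGigliSavare14} whose hypothesis is precisely infinitesimal Hilbertianity; since in your argument Hilbertianity of $H^{1,2}(X,\dist,\meas)$ is only obtained \emph{after} surjectivity, the reasoning is circular as written.

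The paper closes this loop differently: having shown $\mathrm{LIP}(X,\dist) \subset H^{1,2}_0(\Omega,\dist,\meas)$, it first gets the parallelogram identity (\ref{rrf}) for Lipschitz functions (because $H^{1,2}_0(\Omega,\dist,\meas)$ is Hilbert), transfers it to all of $H^{1,2}(X,\dist,\meas)$ as the inequality (\ref{ss}) using the energy-approximation of \cite{AmbrosioGigliSavare13} together with $L^2$-lower semicontinuity of $\Ch$, and obtains equality by the substitution $F \pm G$; only once $\Ch$ is known to be quadratic does it invoke \cite{AmbrosioGigliSavare14} for strong density and conclude. Alternatively, your own device repairs the gap without the parallelogram step: for arbitrary $f \in H^{1,2}(X,\dist,\meas)$ take the approximants $f_n \in \mathrm{LIP}(X,\dist)$ with $f_n \to f$ in $L^2$ and $\Ch(f_n)\to\Ch(f)$; they lie in $V := H^{1,2}_0(\Omega,\dist,\meas)$ by what you already proved, are bounded in $H^{1,2}$, and so, by weak compactness in the Hilbert space $V$ and uniqueness of the $L^2$ limit, $f \in V$ directly. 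Either way, the step ``closed image plus density of Lipschitz functions'' must be replaced; as stated it presupposes a density theorem that rests on the very conclusion being proven.
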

\begin{proof}
Since $\phi_i(x) \to 1$ $\meas$-a.e. $x \in X$, applying the dominated convergence theorem shows that $\phi_i \to 1$ in $L^2(X, \meas)$.
Moreover, since 
$$
\int_{\Omega}|\nabla \phi_i|^2\dist \mathcal{H}^n =-\int_{\Omega}\phi_i\Delta \phi_i\dist \mathcal{H}^n \le \int_{\Omega}|\Delta \phi_i|\dist \mathcal{H}^n,
$$
we have (1).

Next, let us check (2). It is trivial that the map $\iota$ preserves the distances (we identify $H^{1, 2}_0(\Omega, \dist, \meas)$ with the image by $\iota$ for simplicity).  As written in Remark \ref{3332}, it also follows from the smoothness of $\Omega$ that $H^{1, 2}_0(\Omega, \dist, \meas)$ is a Hilbert space, and that $\phi_i f \in H^{1, 2}_0(\Omega, \dist, \meas)$ for all $f \in \mathrm{LIP}(X, \dist)$.

Fix $f \in \mathrm{LIP}(X, \dist)$. Then, since
$$
\int_X|\nabla (\phi_i f)|^2\dist \meas \le \int_X\left(2|\nabla f|^2 + 2|f|^2 |\nabla \phi_i|^2\right)\dist \meas,
$$
we have $\sup_i\|\phi_if\|_{H^{1, 2}}<\infty$.
Therefore, since $\phi_if \to f$ in $L^2(X, \meas)$, Mazur's lemma yields $f \in H^{1, 2}_0(\Omega, \dist, \meas)$.

In particular, 
\begin{equation}\label{rrf}
\Ch(\phi+\psi)+\Ch(\phi-\psi)=2\Ch(\phi )+2\Ch(\psi ) \quad \forall \phi, \psi \in \mathrm{LIP}(X, \dist).
\end{equation}

By \cite{AmbrosioGigliSavare13}, for all $F, G \in H^{1, 2}(X, \dist, \meas)$, there exist sequences $F_i, G_i \in \mathrm{LIP}(X, \dist)$ such that $F_i, G_i \to F, G$ in $L^2(X, \meas)$, respectively and that $\Ch(F_i), \Ch (G_i) \to \Ch(F), \Ch(G)$, respectively. Then, letting $i \to \infty$ in the equality (\ref{rrf}) for $\phi=F_i, \psi=G_i$ with the lower semicontinuity of the Cheeger energy shows
\begin{equation}\label{ss}
\Ch(F+G)+\Ch (F-G) \le 2\Ch(F)+2\Ch(G).
\end{equation}

Replacing $F, G$ by $F+G, F-G$, respectively yields the converse inequality, that is, we have the equality in (\ref{ss}) for $\phi=F, \psi=G$, which  proves that $H^{1, 2}(X, \dist, \meas)$ is a Hilbert space.
Thus, by \cite{AmbrosioGigliSavare14}, $\mathrm{LIP}(X, \dist)$ is dense in $H^{1, 2}(X, \dist, \meas)$.
Since we already proved that $\mathrm{LIP}(X, \dist) \subset H^{1, 2}_0(\Omega, \dist, \meas)$, we conclude.
\end{proof}
\begin{remark}
Recall that if $u_i$ $L^2$-weakly converge to $u$ in $L^{2}(X, \meas)$ with $\sup_i\|u_i\|_{H^{1, 2}}<\infty$, then, we see that $u \in H^{1, 2}(X, \dist, \meas)$ and that $\nabla u_i$ $L^2$-weakly converge to $\nabla u$. Although this statement was already proved in general setting (e.g. \cite{AmbrosioStraTrevisan} and \cite{Gigli}. See also \cite{AmbrosioHonda} and \cite{Honda2}), for reader's convenience, let us give a proof as follows.

Mazur's lemma yields the first statement, $u \in H^{1, 2}(X, \dist, \meas)$.
To get the second one, since $\sup_i\|\nabla u_i\|_{L^2}<\infty$, it is enough to check that
\begin{equation}\label{eer}
\int_X\langle \nabla u_i, f\nabla h\rangle \dist \meas \to \int_X\langle \nabla u, f\nabla h\rangle \dist \meas \quad (i \to \infty) \quad \forall f, h \in C^{\infty}_c(\Omega).
\end{equation} 
Then,
\begin{align*}
\int_X\langle \nabla u_i, f\nabla h\rangle \dist \meas &=\int_Xu_i (-\langle \nabla f, \nabla h\rangle -f\Delta h)\dist \meas  \\
&\to \int_Xu (-\langle \nabla f, \nabla h\rangle -f\Delta h)\dist \meas =\int_X\langle \nabla u, f\nabla h\rangle \dist \meas
\end{align*}
which proves (\ref{eer}).
\end{remark}
\begin{definition}[$L^2$-strong compactness]
A compact metric measure space $(Y, \dist, \nu)$ is said to satisfy the \textit{$L^2$-strong compactness condition} if the canonical inclusion $\iota: H^{1, 2}(Y, \dist, \nu) \hookrightarrow L^2(Y, \nu)$ is a compact operator.
\end{definition}
It is well-known that there are several sufficient conditions to satisfy the $L^2$-strong compactness condition, for instance, PI-condition (i.e. the volume doubling and the Poincar\'e inequality are satisfied), which follows from $\RCD(K, N)$-conditions for $N<\infty$ (see for instance \cite{HK} for the proof of the $L^2$-strong compactness condition). However, in general, for an $n$-dimensional almost smooth compact metric measure space, the $L^2$-strong compactness condition is not satisfied even if $\Omega$ has a uniform lower Ricci bound. 
To see this, 
for any two pointed metric spaces $(X_i, \dist_i, x_i)(i=1, 2)$, let us denote by $(X_1, \dist_1, x_1) * (X_2, \dist_2, x_2)$ their glued pointed metric space as $x_1=x_2$, that is, the metric space is
$$
X_1*X_2:=(X_1 \bigsqcup X_2) /(x_1=x_2)
$$
with the intrinsic metric, and the base point is the glued point. See \cite{BuragoBuragoIvanov} for the detail.
Sometimes, we denote the metric space by $(X_1 *X_2, \dist)$ without any attention on the base points for simplicity.
\begin{example}\label{ddddd}
Let us define a sequence of pointed compact metric spaces $(X_i, \dist_i, x_i)$ as follows. 
Fix $n \ge 3$ and consider a sequence of flat $n$-tori:
$$
\mathbb{T}_i^n:=\mathbb{S}^1(1/2^i) \times \mathbb{S}^1(1/2^i) \times \cdots \times \mathbb{S}^1(1/2^i)
$$
with fixed points $p_i \in \mathbb{T}_i^n$, where $\mathbb{S}^1(r):=\{v \in \mathbb{R}^2;|v|=r\}$. Then, let $(X_1, \dist_1, x_1):=(\mathbb{T}_1^n, \dist_{\mathbb{T}^n_{1}}, p_1)$ and let
$$
(X_{i+1}, \dist_{i+1}, x_{i+1}):=(X_i, \dist_i, x_i) *(\mathbb{T}^n_{i+1}, \dist_{\mathbb{T}^n_{i+1}}, p_{i+1}) \quad \forall i \ge 1.
$$
Then, let us denote by $(X, \dist, x)$ the pointed Gromov-Hausdorff limit space of $(X_i, \dist_i, x_i)$. Note that $(X, \dist)$ is compact, that $\Omega:=X \setminus \{x\}$ satisfies the smoothness with $\mathrm{Ric}_{\Omega}^g \ge 0$, and that there exist canonical isometric embeddings $\mathbb{T}_i^n \hookrightarrow X$ (we identify $\mathbb{T}^n_i$ with the image). Then, we consider the $n$-dimensional Hausdorff measure $\mathcal{H}^n$ as the reference measure $\meas$ on $X$.

Let us check the zero capacity condition. It is trivial that $\mathcal{H}^n(X \setminus \Omega)=0$.
For all $\epsilon>0$, take $\psi_{\epsilon} \in C^{\infty}(\mathbb{R})$ satisfying that $\psi_{\epsilon}|_{(-\infty, \epsilon]} \equiv 0$, that $0\le \psi_{\epsilon} \le 1$,  that $\psi_{\epsilon}|_{[2\epsilon, \infty)} \equiv 1$, that $|\psi_{\epsilon}'| \le 100/\epsilon$ and that $|\psi_{\epsilon}''| \le 100/\epsilon^2$.

Define $\phi_i \in C^{\infty}_c(\Omega)$ by
$$
\phi_i(y):=\sum_{j=1}^i1_{\mathbb{T}^n_j}(y)\psi_{\pi /2^{i+10}}(\dist (x, y)).
$$
Then, since it is easy to see that for some universal constant $C_1>0$
$$
|\Delta \psi_{\pi /2^{i+10}}(\dist (x, \cdot))| (y) \le C_12^{2i} \quad \forall j \le i,  \forall y \in \mathbb{T}^n_j \cap \left(\overline{B}_{\pi/2^{i+9}}(x) \setminus B_{\pi /2^{i+10}}(x)\right),
$$
we see that for all $j \le i$
\begin{align*}
&\int_{\mathbb{T}^n_j}|\Delta \psi_{\pi /2^{i+10}}(\dist (x, y))|\dist \mathcal{H}^n \\
&=\int_{\mathbb{T}^n_j \cap \left(\overline{B}_{\pi/2^{i+9}}(x) \setminus B_{\pi /2^{i+10}}(x)\right)}|\Delta \psi_{\pi /2^{i+10}}(\dist (x, y))|\dist \mathcal{H}^n \le C_22^{(2-n)i},
\end{align*}
where $C_2$ is also a universal constant.
In particular, 
$$
\int_X|\Delta \phi_i|\dist \mathcal{H}^n \le C_2i2^{(2-n)i} \to 0 \quad (i \to \infty)
$$
which proves the zero capacity condition.
Thus, $(X, \dist, \mathcal{H}^n)$ is an $n$-dimensional almost smooth compact metric measure space.

Let us define a sequence $f_i \in L^2(X, \mathcal{H}^n)$ by
$$
f_i:=\frac{1}{\mathcal{H}^n(\mathbb{T}^n_i)}1_{\mathbb{T}^n_i}.
$$
Then, it is easy to see that $f_i$ $L^2$-weakly converge to $0$ and that $f_i \in H^{1, 2}(X, \dist, \meas)$ with $\|f_i\|_{L^2}=\|f_i\|_{H^{1, 2}}=1$  (see also Example \ref{ccd}).
Since $f_i$ does not $L^2$-strongly converge to $0$, the $L^2$-strong compactness condition does not hold.
\end{example}
It follows from standard arguments in functional analysis that if an infinitesimally Hilbertian compact metric measure space $(Y, \dist, \nu)$ satisfies the $L^2$-strong compactness condition with $\mathrm{dim}\,L^2(Y, \nu)=\infty$, then, the spectrum of $-\Delta$ is discrete and unbounded (each eigenvalue has finite multiplicities). Thus, we then denote the eigenvalues by
$$
0=\lambda_1(Y, \dist, \nu) \le \lambda_2(Y, \dist, \nu) \le \lambda_2(Y, \dist, \nu) \le \cdots \to \infty
$$
counted with multiplicities, and denote the corresponding eigenfunctions by $\phi_i^Y$ with $\|\phi_i^Y\|_{L^2}=1$. We always fix an $L^2$-orthogonal basis $\{\phi_i^Y\}_i$ consisting of eigenfunctions, immediately.
Moreover, it also holds that for all $f \in L^2(Y, \nu)$,
\begin{equation}\label{ee}
f=\sum_i \left(\int_Yf\phi_i^Y\dist \nu\right) \phi_i^Y\quad \mathrm{in}\,L^{2}(Y,  \nu)
\end{equation}
and that for all $f \in H^{1, 2}(Y, \dist, \nu)$,
\begin{equation}\label{ee2}
f=\sum_i \left(\int_Yf\phi_i^Y\dist \nu\right) \phi_i^Y\quad \mathrm{in}\,H^{1, 2}(Y, \dist, \nu).
\end{equation}
For reader's convenience, we will give proofs of them in the appendix.

We are now in a position to give the main result:
\begin{theorem}[From $\mathrm{Ric}_{\Omega}^g\ge K(n-1)$ to $\BE(K(n-1), n)$]\label{be}
Let $(X, \dist, \meas)$ be an $n$-dimensional almost smooth compact metric measure space.
Assume that $(X, \dist, \meas)$ satisfies the $L^2$-strong compactness condition, that each eigenfunction $\phi_i^X$
 satisfies $|\nabla \phi_i^X| \in L^{\infty}(X, \meas)$ and that $\mathrm{Ric}_{\Omega}^g\ge K(n-1)$ for some $K \in \mathbb{R}$. Then, $(X, \dist, \meas)$ satisfies the $\BE(K(n-1), n)$-condition.
\end{theorem}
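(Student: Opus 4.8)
The plan is to transfer the classical pointwise Bochner identity, valid on the smooth part $\Omega$, to the global weak inequality (\ref{eq:boch}), using the capacity cutoffs $\phi_j$ to absorb all contributions coming from the singular set $X\setminus\Omega$. First I would reduce to a dense class: the two sides of (\ref{eq:boch}) are continuous under $H^{1,2}$-convergence of $f$ together with $\Delta f$ (one checks that $|\nabla f|^2$, $(\Delta f)^2$ and $\langle\nabla f,\nabla\Delta f\rangle$ then converge in $L^1$, while $\psi,\Delta\psi\in L^\infty$ for the test function), so it suffices to treat $f$ in the finite span $V_m:=\mathrm{span}\{\phi_1^X,\dots,\phi_m^X\}$; the general case follows by truncating the expansions (\ref{ee}) and (\ref{ee2}) of $f$ and of $\Delta f$. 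Any $f\in V_m$ satisfies $\Delta f=-\sum_i\lambda_i c_i\phi_i^X\in H^{1,2}(X,\dist,\meas)$ and $|\nabla f|\in L^\infty(X,\meas)$ by hypothesis; by interior elliptic regularity for the eigenvalue equation it is smooth on $\Omega$, and by the identification $H^{1,2}(X,\dist,\meas)=H^{1,2}_0(\Omega,\dist,\meas)$ of Proposition \ref{fun} the global Laplacian coincides on $\Omega$ with the Riemannian $\Delta_g$. Hence the Bochner--Weitzenböck identity
\[
\tfrac12\Delta_g|\nabla f|^2=|\mathrm{Hess}_g f|^2+\langle\nabla f,\nabla\Delta f\rangle+\mathrm{Ric}_{\Omega}^g(\nabla f,\nabla f)
\]
holds on $\Omega$, and combining the trace inequality $|\mathrm{Hess}_g f|^2\ge(\Delta_g f)^2/n$ with $\mathrm{Ric}_{\Omega}^g\ge K(n-1)g$ yields the pointwise inequality
\[
\tfrac12\Delta_g|\nabla f|^2\ge\frac{(\Delta f)^2}{n}+\langle\nabla f,\nabla\Delta f\rangle+K(n-1)|\nabla f|^2\qquad\text{on }\Omega.
\]

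Before integrating I would record a uniform Hessian estimate, which is the technical heart. Testing the Bochner \emph{identity} against $\phi_j$ and integrating by parts on $\Omega$ gives $\int_\Omega\tfrac12\Delta_g|\nabla f|^2\,\phi_j\di\meas=\tfrac12\int_\Omega|\nabla f|^2\Delta\phi_j\di\meas\le\tfrac12\||\nabla f|^2\|_{L^\infty}\sup_j\int_\Omega|\Delta\phi_j|\di\meas$, which is finite by (\ref{uuh}); since the remaining terms of the identity are controlled ($\nabla\Delta f\in L^2$, $\nabla f\in L^\infty$, $\mathrm{Ric}_{\Omega}^g$ bounded below), this forces $\sup_j\int_\Omega|\mathrm{Hess}_g f|^2\phi_j\di\meas<\infty$, hence $\mathrm{Hess}_g f\in L^2(\Omega)$. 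Consequently $|\nabla f|^2\in H^{1,2}(X,\dist,\meas)$ with $\nabla|\nabla f|^2=2\,\mathrm{Hess}_g f(\nabla f,\cdot)$ (by the same cutoff-and-Mazur argument as in the proof of Proposition \ref{fun}). Now I fix a nonnegative test function $\psi$ as in Definition \ref{bedef} (the $\phi$ there), multiply the pointwise inequality by $\psi\phi_j\ge0$, which is compactly supported in $\Omega$, and integrate by parts on the smooth manifold to obtain
\[
-\tfrac12\int_\Omega\big\langle\nabla|\nabla f|^2,\nabla(\psi\phi_j)\big\rangle\di\meas\ge\int_\Omega\psi\phi_j\Big(\frac{(\Delta f)^2}{n}+\langle\nabla f,\nabla\Delta f\rangle+K(n-1)|\nabla f|^2\Big)\di\meas.
\]

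Finally I would let $j\to\infty$ after expanding $\nabla(\psi\phi_j)=\phi_j\nabla\psi+\psi\nabla\phi_j$. By dominated convergence and $\phi_j\to1$ the right-hand side tends to $\int_X\psi(\cdots)\di\meas$, and the $\phi_j\nabla\psi$-part of the left-hand side tends to $-\tfrac12\int_X\langle\nabla|\nabla f|^2,\nabla\psi\rangle\di\meas=\tfrac12\int_X|\nabla f|^2\Delta\psi\di\meas$, the last equality being the defining property of $\Delta\psi$ together with $|\nabla f|^2\in H^{1,2}$. The only remaining term is the cutoff error $E_j:=-\tfrac12\int_\Omega\psi\,\langle\nabla|\nabla f|^2,\nabla\phi_j\rangle\di\meas$, which by Cauchy--Schwarz is bounded by $\tfrac12\|\psi\|_{L^\infty}\|\nabla|\nabla f|^2\|_{L^2(\Omega)}\|\nabla\phi_j\|_{L^2(\Omega)}$ and hence tends to $0$, since $\nabla|\nabla f|^2\in L^2$ and $\int_\Omega|\nabla\phi_j|^2\di\meas\to0$ by (\ref{uuh}) as noted after (\ref{ffff}). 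Combining the three limits gives (\ref{eq:boch}) for $f\in V_m$, whence the $\BE(K(n-1),n)$ condition for all admissible $f$ by the reduction of the first step.

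The step I expect to be the genuine obstacle, and the place where the almost-smooth hypotheses are essential, is the uniform Hessian bound of the second paragraph: it is what allows the incomplete manifold $\Omega$ to behave, as far as Bochner's inequality is concerned, as if it were closed, with the zero-capacity condition (\ref{uuh}) precisely killing the boundary contributions at $X\setminus\Omega$. The hypotheses $|\nabla\phi_i^X|\in L^\infty$ and the $L^2$-strong compactness both feed into this: the former guarantees $|\nabla f|^2\in L^\infty$ so that the integrated Bochner identity closes against $\int_\Omega|\Delta\phi_j|$, and the latter provides the spectral expansions (\ref{ee})--(\ref{ee2}) underlying the reduction to $V_m$. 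Once the Hessian is in $L^2$, every limit above is routine; without it, neither the main term $\int_X|\nabla f|^2\Delta\psi$ nor the vanishing of $E_j$ can be justified.
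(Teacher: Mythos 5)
Your argument follows the paper's proof essentially step for step: truncation to the span of eigenfunctions via (\ref{ee2}), interior elliptic regularity and the pointwise Bochner inequality on $\Omega$, the uniform Hessian bound obtained by pairing with the cutoffs $\phi_j$ and invoking (\ref{uuh}) together with $|\nabla f|\in L^{\infty}$, the membership $|\nabla f|^2\in H^{1,2}(X,\dist,\meas)$ via the cutoff-and-Mazur argument, and finally testing against $\psi\phi_j$ and passing to the limit. All of this is the paper's route, and most of it is carried out correctly.

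There is, however, one genuine error, exactly at the final limit: you dispose of the cutoff error $E_j=-\tfrac12\int_\Omega\psi\langle\nabla|\nabla f|^2,\nabla\phi_j\rangle\di\meas$ by claiming that $\int_\Omega|\nabla\phi_j|^2\di\meas\to0$ ``by (\ref{uuh}) as noted after (\ref{ffff})''. This misreads that remark: the paper observes that (\ref{ffff}) would hold \emph{if} $\|\Delta\phi_j\|_{L^1}\to0$, but Definition \ref{def:asmm} only assumes $\sup_j\|\Delta\phi_j\|_{L^1}<\infty$, and the identity $\int_\Omega|\nabla\phi_j|^2\di\meas=-\int_\Omega\phi_j\Delta\phi_j\di\meas\le\int_\Omega|\Delta\phi_j|\di\meas$ then yields only \emph{boundedness} of $\|\nabla\phi_j\|_{L^2}$, not its vanishing. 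The failure is not hypothetical: in the glued space of Example \ref{ccd} with $n=2$ and the cutoffs $\phi_j=\psi_{j^{-1}}(\dist(p,\cdot))$, one has $\sup_j\|\Delta\phi_j\|_{L^1}<\infty$, while $\|\nabla\phi_j\|_{L^2}^2$ dominates the two-dimensional condenser capacity of $\bigl(B_{1/j}(p),B_{2/j}(p)\bigr)$, which is of order $2\pi/\log 2$ per sheet and hence bounded away from zero. The repair is what the paper actually does: by Proposition \ref{fun}(1) and the Remark following it, $\phi_j\to1$ in $L^2$ with $\sup_j\|\phi_j\|_{H^{1,2}}<\infty$ implies $\nabla\phi_j\weakto 0$ weakly in $L^2$; since $\psi\nabla|\nabla f|^2$ is a single fixed $L^2$ vector field (using $\|\nabla|\nabla f|^2\|_{L^2}\le 2\|\mathrm{Hess}_{f}\|_{L^2}\|\nabla f\|_{L^{\infty}}<\infty$, which you already established), the pairing $E_j$ tends to $0$ by weak convergence alone. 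With this substitution your proof is complete and coincides with the paper's.
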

\begin{proof}
Let us use the same notation as above, that is, let $f_N:=\sum_i^Na_i\phi_i^X$, where $a_i:=\int_Xf\phi_i^X\dist \meas$.
Note that by (\ref{ee2}), $f_N, \Delta f_N \to f, \Delta f$ in $H^{1, 2}(X, \dist, \meas)$, respectively as $N \to \infty$.
In the following, for all $h \in C^{\infty}(\Omega)$, the Laplacian $\mathrm{tr} (\mathrm{Hess}_h)$ defined in Riemannian geometry is also denoted by the same notation $\Delta h$, without any attention because
\begin{equation}\label{kkjjnnmmkk}
\int_{\Omega}\langle \nabla h, \nabla \psi\rangle \dist \mathcal{H}^n=-\int_{\Omega}\mathrm{tr} (\mathrm{Hess}_h)\psi \dist \mathcal{H}^n, \quad \forall \psi \in C^{\infty}_c(\Omega)
\end{equation}
is satisfied and (\ref{kkjjnnmmkk}) characterizes the function $\mathrm{tr} (\mathrm{Hess}_f)$ in $L^2_{\mathrm{loc}}(\Omega, \mathcal{H}^n)$.

Fix $N \in \mathbb{N}$. Then, let us prove that $|\nabla f_N|^2 \in H^{1, 2}(X, \dist, \meas)$ as follows.

By our assumption on the eigenfunctions, we see that $|\nabla f_N| \in L^{\infty}(X, \meas)$. Moreover, the elliptic regularity theorem shows that $f_N|_{\Omega} \in C^{\infty}(\Omega)$. 
Since $\mathrm{Ric}_{\Omega}^g\ge K(n-1)$, we have 
\begin{equation}\label{rr}
\frac{1}{2}\Delta |\nabla f_N|^2 \ge |\mathrm{Hess}_{f_N}|^2+\langle \nabla \Delta f_N, \nabla f_N\rangle +K(n-1)|\nabla f_N|^2 \quad \forall x \in \Omega.
\end{equation}
Thus, multiplying $\phi_i$ on both sides and then integrating this over $\Omega$ show
\begin{align}\label{gg}
&\frac{1}{2}\int_{\Omega}|\nabla f_N|^2\Delta \phi_i \dist \mathcal{H}^n \ge  \int_{\Omega}\phi_i\left( |\mathrm{Hess}_{f_N}|^2 +\langle \nabla \Delta f_N, \nabla f_N\rangle +K(n-1)|\nabla f_N|^2\right)\dist \mathcal{H}^n.
\end{align}
Since $|\nabla f_N| \in L^{\infty}(X, \meas)$ and our assumption on the zero capacity, the inequality (\ref{gg}) implies
$$
\limsup_{i \to \infty}\int_{\Omega}\phi_i|\mathrm{Hess}_{f_N}|^2\dist \mathcal{H}^n<\infty.
$$
In particular, 
$$
\int_A |\mathrm{Hess}_{f_N}|^2\dist \mathcal{H}^n\le \limsup_{i \to \infty}\int_{\Omega}\phi_i|\mathrm{Hess}_{f_N}|^2\dist \mathcal{H}^n<\infty \quad \forall A \Subset \Omega.
$$
Thus, the monotone convergence theorem yields
\begin{equation}\label{hhhj}
\int_{\Omega}|\mathrm{Hess}_{f_N}|^2\dist \mathcal{H}^n<\infty.
\end{equation}

On the other hand, since $\phi_i \in C_c^{\infty}(\Omega)$ and $f_N|_{\Omega} \in C^{\infty}(\Omega)$, we have $\phi_i |\nabla f_N|^2 \in H^{1, 2}(X, \dist, \meas)$. Moreover, since
\begin{align*}
\int_X|\nabla (\phi_i|\nabla f_N|^2)|^2\dist \meas & \le \int_X \left(2 |\nabla \phi_i|^2|\nabla f_N|^4 + 2|\nabla |\nabla f_N|^2|^2\right) \dist \meas \\
&\le \int_X \left(2 |\nabla \phi_i|^2|\nabla f_N|^4 + 2|\mathrm{Hess}_{f_N}|^2 |\nabla f_N|^2\right) \dist \meas,
\end{align*}
by (\ref{hhhj}), we have $\sup_i\|\phi_i|\nabla f_N|^2\|_{H^{1, 2}}<\infty$ which completes the proof of the desired statement,  $|\nabla f_N|^2 \in H^{1, 2}(X, \dist, \meas)$, because $\phi_i|\nabla f_N|^2 \to |\nabla f_N|^2$ in $L^2(X, \meas)$.

We are now in a position to finish the proof. Let $\phi \in D(\Delta) \cap L^{\infty}(X, \meas)$ with $\Delta \phi \in L^{\infty}(X, \meas)$ and $\phi \ge 0$.
Multiplying $\phi \phi_i$ on both sides of (\ref{rr}) and integrating this over $X$ show
\begin{align}\label{gg2}
-\frac{1}{2}\int_X\langle \nabla (\phi \phi_i), \nabla |\nabla f_N|^2\rangle\dist \meas \ge  \int_X\phi\phi_i \left(|\mathrm{Hess}_{f_N}|^2 +\langle \nabla \Delta f_N, \nabla f_N\rangle +K(n-1)|\nabla f_N|^2\right)\dist \meas.
\end{align}

Recall that $\phi_i \to 1$ in $L^2(X, \meas)$ and that $\nabla \phi_i$ $L^2$-weakly converge to $\nabla 1=0$ with $\nabla (\phi \phi_i)=\phi_i\nabla \phi +\phi\nabla \phi_i$. Thus, 
we have
$$
\mathrm{LHS}\,\mathrm{of}\, (\ref{gg2}) \to -\frac{1}{2}\int_X\langle \nabla \phi, \nabla |\nabla f_N|^2\rangle \dist \meas = \frac{1}{2}\int_X |\nabla f_N|^2\Delta \phi\dist \meas,
$$
where we used $\|\nabla |\nabla f_N|^2\|_{L^2}\le 2\|\mathrm{Hess}_{f_N}\|_{L^2}\|\nabla f_N\|_{L^{\infty}}<\infty$ and $|\nabla f_N|^2 \in H^{1, 2}(X, \dist, \meas)$.
Moreover, the dominated convergence theorem yields
$$
\mathrm{RHS}\,\mathrm{of}\, (\ref{gg2}) \to \int_X\phi \left(|\mathrm{Hess}_{f_N}|^2 +\langle \nabla \Delta f_N, \nabla f_N\rangle +K(n-1)|\nabla f_N|^2\right)\dist \meas.
$$
Thus, combining these with letting $i \to \infty$ in (\ref{gg2}) shows
\begin{align*}
\frac{1}{2}\int_X |\nabla f_N|^2\Delta \phi\dist \meas &\ge  \int_X\phi \left(|\mathrm{Hess}_{f_N}|^2 +\langle \nabla \Delta f_N, \nabla f_N\rangle +K(n-1)|\nabla f_N|^2\right)\dist \meas \\
&\ge \int_X\phi \left(\frac{(\Delta f_N)^2}{n} +\langle \nabla \Delta f_N, \nabla f_N\rangle +K(n-1)|\nabla f_N|^2\right)\dist \meas.
\end{align*}
Therefore, letting $N \to \infty$ yields
$$
\frac{1}{2}\int_X|\nabla f|^2 \Delta \phi \dist \meas  \ge \int_X\phi \left( \frac{(\Delta f)^2}{n} +\langle \nabla \Delta f, \nabla f\rangle +K(n-1)|\nabla f|^2\right)\dist \meas
$$
which completes the proof.
\end{proof}
Let us apply Theorem \ref{be} to an explicit simple example as follows.
\begin{example}\label{ccd}
Let us check that the metric measure space $$(X, \dist, \meas):=\left(\mathbb{S}^n(1) * \mathbb{S}^n(1), \dist, \mathcal{H}^n\right)$$ satisfies the $\BE(n-1, n)$-condition ($n \ge 2$), where $\mathbb{S}^n(r):=\{v \in \mathbb{R}^{n+1}; |v|=r\}$.
Let us denote by $\mathbb{S}^n_1(1)$ and $\mathbb{S}^m_2(1)$, respectively, the images of the canonical isometric embeddings $\mathbb{S}^n(1) \hookrightarrow X$ to the first sphere and the second one, respectively.
Moreover, we denote by $p$ the intersection point of them. It is worth pointing out that $(X, \dist, \meas)$ satisfies the Ahlfors $n$-regularity, which is easily checked.

\textit{Being an $n$-dimensional almost smooth compact metric measure space.} 
Let $\Omega:=X \setminus \{p\}$. Then, it is trivial that $\Omega$ satisfies the smoothness with $\mathrm{Ric}_{\Omega}^g\ge (n-1)$ and $\mathcal{H}^n(X \setminus \Omega)=0$.

Let us use $\psi_{\epsilon}$ as in Example \ref{ddddd}.
Then, by an argument similar to that in Example \ref{ddddd}, it is easy to check that the functions $\phi_i(x):=\psi_{i^{-1}}(\dist(p, x))$ satisfies the zero capacity condition. Thus, $(X, \dist, \meas)$ is an $n$-dimensional almost smooth compact metric measure space.

\textit{Satisfying the $L^2$-strong compactness condition.}
We remark that 
\begin{equation}\label{gvv}
f1_{\mathbb{S}^n_j(1)} \in H^{1, 2}(X, \dist, \meas) \quad \forall f \in H^{1, 2}(X, \dist, \meas)
\end{equation}
and 
\begin{equation}\label{qqqqq2}
h1_{\mathbb{S}^n_j(1)} \in H^{1, 2}(X, \dist, \meas) \quad \forall h \in H^{1, 2}(\mathbb{S}^n_j(1), \dist, \mathcal{H}^n)
\end{equation}
are satisfied,
which come from the zero capacity condition with the truncation argument, that is, for functions $\phi_i1_{\mathbb{S}^n_j(1)}(f\wedge L \vee -L)$ and $\phi_i1_{\mathbb{S}^n_j(1)}(h\wedge L\vee -L)$, letting $i \to \infty$ and then letting $L \to \infty$ show (\ref{gvv}) and (\ref{qqqqq2}) (recall the proof of (2) of Proposition \ref{fun}).

Let $f_i \in H^{1, 2}(X, \dist, \meas)$ with $\sup_i\|f_i\|_{H^{1, 2}}<\infty$.  Put $f_i^j:=f_i1_{\mathbb{S}^n_j(1)} \in H^{1, 2}(\mathbb{S}^n_j(1), \dist, \mathcal{H}^n)$. Then, since the $L^2$-strong compactness condition holds for $(\mathbb{S}^n_j(1), \dist, \mathcal{H}^n)$, there exist a subsequence $i(k)$ and $f^j \in L^2(\mathbb{S}^n_j(1), \mathcal{H}^n)$ such that $f_{i(k)}^j \to f^j$ in $L^2(\mathbb{S}^n_j(1), \mathcal{H}^n)$ for all $j \in \{1, 2\}$.
In particular, $f_{i(k)}=f_{i(k)}^1+f_{i(k)}^2 \to f^1+f^2=:f$ in $L^2(X, \meas)$, which proves the $L^2$-strong compactness condition for $(X, \dist, \meas)$.

\textit{Satisfying the gradient estimates on the eigenfucntions and the $\BE(n-1, n)$ condition.}
Let $f$ be an eigenfunction, that is, $f \in D(\Delta)$ with $-\Delta f=\lambda f$ for some $\lambda \ge 0$. For any $h \in H^{1, 2}(\mathbb{S}^n_j(1), \dist, \mathcal{H}^n)$, put $\hat{h}=h1_{\mathbb{S}^n_j(1)} \in H^{1, 2}(X, \dist, \meas)$. Then, since
\begin{equation}\label{aasa}
\int_X\langle \nabla f, \nabla \hat{h}\rangle \dist \meas =\lambda \int_Xf\hat{h}\dist \meas
\end{equation}
and 
$$
\mathrm{LHS}\,\mathrm{of}\,(\ref{aasa}) = \int_{\mathbb{S}^n_j(1)}\langle \nabla f, \nabla h\rangle \dist \mathcal{H}^n, \quad \mathrm{RHS}\,\mathrm{of}\,(\ref{aasa}) = \lambda \int_{\mathbb{S}^n_j(1)}f h \dist \mathcal{H}^n, 
$$
we see that
$f|_{\mathbb{S}^n_j(1)}$ is an eigenfunction of $(\mathbb{S}^n_j(1), \dist, \mathcal{H}^n)$. Thus, $|\nabla (f|_{\mathbb{S}^n_j(1)})| \in L^{\infty}(\mathbb{S}_j^n(1), \mathcal{H}^n)$, which implies $|\nabla f| \in L^{\infty}(X, \meas)$.

Therefore, we can apply Theorem \ref{be} to show that $(X, \dist, \meas)$ satisfies the $\BE(n-1, n)$-condition.

\textit{Coincidence between the induced distance $\dist_{\Ch}$ by the Cheeger energy and $\dist$.} 
Let us prove:
\begin{equation}\label{kkm}
\dist_{\Ch}(x, y)= \dist (x, y), \quad \forall x, y \in X,
\end{equation}
where
\begin{equation}\label{honn}
\dist_{\Ch}(x, y):=\sup \left\{\phi (x)-\phi (y); \phi \in C^0(X) \cap H^{1, 2}(X, \dist, \meas), |\nabla \phi |(z) \le 1, \meas-a.e. z \in X\right\}. 
\end{equation}
Let $x \in \mathbb{S}^n_1(1)$ and let $y \in \mathbb{S}^n_2(1)$.
For any $\phi$ as in the RHS of (\ref{honn}), 
\begin{align}\label{hfhf}
\phi (x)-\phi(y) &= \phi|_{\mathbb{S}^n_1(1)}(x)-\phi|_{\mathbb{S}^n_1(1)}(p) + \phi|_{\mathbb{S}^n_2(1)}(p)-\phi|_{\mathbb{S}^n_2(1)}(y) \nonumber \\
&\le \dist_{\mathbb{S}^n_1(1)}(x, p)+\dist_{\mathbb{S}^n_2(1)}(p, y)=\dist (x, y),
\end{align}
where we used the fact that $\dist=\dist_{\Ch}$ in $(\mathbb{S}^n_j(1), \dist_{\mathbb{S}^n_j(1)}, \mathcal{H}^n)$. Thus, taking the supremum in (\ref{hfhf}) with respect to $\phi$ shows the inequality `$\le$' in (\ref{kkm}).

To get the converse inequality, let 
$$
\phi(z):=(1_{\mathbb{S}^n_1(1)}(z)-1_{\mathbb{S}^n_2(1)}(z))\dist (p, z).
$$
Then, we see that $\phi \in \mathrm{LIP}(X, \dist)$, that $\mathrm{Lip} \phi (z)\le 1$ for all $z \in X$, and that $\phi(x)-\phi(y)=\dist (x, y)$, which proves the converse inequality `$\ge$' in (\ref{kkm}).

Similarly, we can prove (\ref{kkm}) in the remaining case, thus, we have (\ref{kkm}) for all $x, y \in X$.

\textit{Poincar\'e inequality and $\RCD (K, \infty)$ condition are not satisfied.}
Assume that $(X, \dist, \meas)$ satisfies the $(1, 2)$-Poincar\'e inequality, that is, there exists $C>0$ such that for all $r>0$, all $x \in X$ and all $f \in H^{1, 2}(X, \dist, \meas)$, it holds that
\begin{equation}\label{ll}
\frac{1}{\meas (B_r(x))}\int_{B_r(x)}\left| f- \frac{1}{\meas (B_r(x))}\int_{B_r(x)}f\dist \meas \right|\le Cr\left(\frac{1}{\meas (B_r(x))}\int_{B_r(x)}|\nabla f|^2\dist \meas\right)^{1/2}.
\end{equation}
Let $\phi:=1_{\mathbb{S}^n_1(1)}-1_{\mathbb{S}^n_2(1)}$. By (\ref{qqqqq2}), we have $\phi \in H^{1, 2}(X, \dist, \meas)$.
Then, by the locality of the slope, we have $|\nabla \phi|=0$ $\meas$-a.e.. In particular, (\ref{ll}) yields that $\phi$ must be a constant, which is a contradiction.

By the same reason, for all $K \in \mathbb{R}$, $(X, \dist, \meas)$ does not satisfy $\RCD(K, \infty)$-condition. 
\end{example}
\begin{remark}\label{yy6}
Example \ref{ccd} also tells us that (\ref{22}) does not imply the expected Bishop-Gromov inequalty.
In fact, under the same notation as in Example \ref{ccd}, letting $q$ be the antipodal point of $p$ in $\mathbb{S}^n_1(1)$ yields
$$
\frac{\meas (B_{2\pi}(q))}{\meas (B_\pi(q))}=2 >1 =\frac{\mathcal{H}^n(B_{2\pi}^{\mathbb{S}^n(1)}(x))}{\mathcal{H}^n(B_{\pi}^{\mathbb{S}^n(1)}(x))} \quad \forall x \in \mathbb{S}^n(1)
$$
which is the `reverse' Bishop-Gromov inequality.
Similarly, the $\BE(n-1, n)$ condition with `$\dist=\dist_{\Ch}$' does not imply the expected Bonnet-Myers theorem.
\end{remark}
\begin{corollary}[Characterization of $\RCD$ condition on almost smooth compact metric measure space]\label{corrcd}
Let $(X, \dist, \meas)$ be an $n$-dimensional almost smooth compact metric measure space associated with an open subset $\Omega$ of $X$, and let $K \in \mathbb{R}$.
Then, $(X, \dist, \meas)$ is a $\RCD(K(n-1), n)$ space if and only if the following four conditions hold:
\begin{enumerate}
\item the Sobolev to Lipschitz property holds;
\item the $L^2$-strong compactness condition holds; 
\item any eigenfunction is Lipschitz;
\item $\mathrm{Ric}_{\Omega}^g\ge K(n-1)$ holds.
\end{enumerate}
\end{corollary}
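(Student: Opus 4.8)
The plan is to prove the two implications separately, reading off the easy direction from Theorem \ref{be} and establishing the four conditions one at a time in the converse.

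For the implication ``$\Leftarrow$'', suppose (1)--(4) hold. Since a Lipschitz function has bounded slope, condition (3) guarantees that every eigenfunction $\phi_i^X$ satisfies $|\nabla \phi_i^X| \in L^\infty(X, \meas)$. Combined with (2) and (4), the hypotheses of Theorem \ref{be} are met, so $(X, \dist, \meas)$ is a $\BE(K(n-1), n)$ space. Adjoining the Sobolev to Lipschitz property (1) and invoking Definition \ref{def:RCDspaces}, we conclude that $(X, \dist, \meas)$ is an $\RCD(K(n-1), n)$ space.

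For ``$\Rightarrow$'', assume $(X, \dist, \meas)$ is $\RCD(K(n-1), n)$. Condition (1) is immediate from Definition \ref{def:RCDspaces}. For (2), I would recall that every $\RCD(K, N)$ space with $N < \infty$ is a PI space (locally doubling with a Poincar\'e inequality); since $(X, \dist)$ is compact, the inclusion $H^{1,2}(X, \dist, \meas) \hookrightarrow L^2(X, \meas)$ is then compact (see \cite{HK}), which is exactly the $L^2$-strong compactness condition. For (3), I would use that the heat flow $P_t$ on an $\RCD(K, N)$ space with $N<\infty$ is ultracontractive, so any $L^2$-eigenfunction $f$ with $-\Delta f = \lambda f$ lies in $L^\infty(X, \meas)$ through $f = e^{\lambda t} P_t f$; the Bakry--Ledoux gradient estimate available on $\BE(K(n-1), \infty)$ spaces then gives $|\nabla P_t f| \in L^\infty(X, \meas)$, hence $|\nabla f| \in L^\infty(X, \meas)$, and the Sobolev to Lipschitz property upgrades this to Lipschitz continuity of $f$.

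The crux is condition (4), which I would prove by localizing the weak Bochner inequality to $\Omega$. First I would check that any $f \in C^{\infty}_c(\Omega)$ is admissible in Definition \ref{bedef}: by the Sobolev space identification in Proposition \ref{fun} the space $C^{\infty}_c(\Omega)$ is dense in $H^{1,2}(X, \dist, \meas)$, and the characterization (\ref{kkjjnnmmkk}) of the Riemannian Laplacian then shows $f \in D(\Delta)$ with $\Delta f = \mathrm{tr}(\mathrm{Hess}_f) \in C^{\infty}_c(\Omega) \subset H^{1,2}(X, \dist, \meas)$. Testing (\ref{eq:boch}) against nonnegative $\phi \in C^{\infty}_c(\Omega)$, integrating by parts on the smooth manifold $(\Omega, g)$, and using the arbitrariness of $\phi$ reduces the inequality to the pointwise bound $\frac{1}{2}\Delta |\nabla f|^2 \ge \frac{(\Delta f)^2}{n} + \langle \nabla \Delta f, \nabla f\rangle + K(n-1)|\nabla f|^2$ on $\Omega$. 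Substituting the classical Bochner identity $\frac{1}{2}\Delta|\nabla f|^2 = |\mathrm{Hess}_f|^2 + \langle \nabla \Delta f, \nabla f\rangle + \mathrm{Ric}_{\Omega}^g(\nabla f, \nabla f)$ cancels the cross term and leaves $|\mathrm{Hess}_f|^2 + \mathrm{Ric}_{\Omega}^g(\nabla f, \nabla f) \ge \frac{(\Delta f)^2}{n} + K(n-1)|\nabla f|^2$. Finally, fixing $p \in \Omega$ and $v \in T_p M^n$ and choosing $f$ in normal coordinates with $\nabla f(p) = v$ and $\mathrm{Hess}_f(p) = 0$ makes both $|\mathrm{Hess}_f|^2$ and $(\Delta f)^2/n$ vanish at $p$, leaving $\mathrm{Ric}_{\Omega}^g(v, v) \ge K(n-1)|v|^2$; as $p$ and $v$ are arbitrary this is precisely $\mathrm{Ric}_{\Omega}^g \ge K(n-1)$. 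The main obstacle is this localization step -- verifying that functions and test functions supported in $\Omega$ are genuinely admissible for the global weak formulation, and that the resulting global integral inequality descends to a pointwise one on the smooth part -- which is exactly where Proposition \ref{fun} and the zero capacity condition carry the weight.
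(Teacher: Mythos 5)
Your proposal is correct, and both directions except the Ricci-bound step follow the paper; but for the crux, condition (4), you take a genuinely different route. The paper applies Gigli's Bochner inequality for $H^{1,2}_H$-vector fields to fields of the form $f\nabla h$ with $f,h\in C^\infty_c(\Omega)$, obtains the pointwise vector Bochner inequality (\ref{ttf}) on $\Omega$, and observes that this inequality is \emph{equivalent} to $\mathrm{Ric}_\Omega^g(f\nabla h,f\nabla h)\ge K(n-1)|f\nabla h|^2$, so the Ricci bound is read off immediately since such vector fields span every tangent space. You instead stay entirely at the scalar level: since $\RCD(K(n-1),n)$ is \emph{defined} in this paper as $\BE(K(n-1),n)$ plus Sobolev-to-Lipschitz, you may test the weak Bochner inequality (\ref{eq:boch}) with $f,\phi\in C^\infty_c(\Omega)$ (admissibility via Proposition \ref{fun}, Remark \ref{3332} and (\ref{kkjjnnmmkk}), exactly as you indicate), descend to the pointwise scalar inequality on $\Omega$, and then cancel the Hessian and dimension terms by choosing $f$ in normal coordinates with $\nabla f(p)=v$, $\mathrm{Hess}_f(p)=0$ (a cutoff times a linear coordinate function works, since the Christoffel symbols vanish at $p$). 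What each approach buys: yours avoids Gigli's second-order vector calculus (Hodge Laplacian, $H^{1,2}_H$-fields) altogether and needs only the scalar $\BE$ condition, so it is more elementary and self-contained relative to this paper's definitions; the paper's route is shorter granted Gigli's machinery, since the Hessian enters the vector Bochner formula with the favorable sign and no second-order test-function trick is required. Your treatment of condition (3) also differs: the paper cites Jiang's gradient estimates, whereas you derive $|\nabla f|\in L^\infty$ from ultracontractivity of the heat flow plus the Bakry--Ledoux $L^\infty$-to-Lipschitz regularization and then upgrade via Sobolev-to-Lipschitz; this is a valid alternative resting on standard $\RCD$ heat-semigroup theory rather than on the cited reference.
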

\begin{proof}
By Theorem \ref{be}, it is enough to check `only if' part. 
If $(X, \dist, \meas)$ is a $\RCD (K(n-1), n)$, then, applying Gigli's Bochner inequality (for $H^{1, 2}_H$-vector fields) in \cite{Gigli} shows that for all $f, h, \phi \in C^{\infty}_c(\Omega)$ with $\phi \ge 0$,
$$
\int_X\frac{\phi}{2}\Delta |f\nabla h|^2 \ge \int_X\phi\left( |\nabla (f\nabla h)|^2 -\langle \Delta_{H}(f\dist h), f\dist h\rangle +K(n-1)|f\nabla h|^2\right)\dist \meas
$$
which implies 
\begin{equation}\label{ttf}
\frac{1}{2}\Delta |f\nabla h|^2 \ge |\nabla (f\nabla h)|^2 -\langle \Delta_{H}(f\dist h), f\dist h\rangle +K(n-1)|f\nabla h|^2 \quad \forall x \in \Omega
\end{equation}
because $\phi$ is arbitrary, where $\Delta_H:=\dist \delta +\delta \dist$ is the Hodge Laplacian acting on $1$-forms.
In particular, since (\ref{ttf}) is equivalent to $\mathrm{Ric}_{\Omega}^g(f\nabla h, f\nabla h) \ge K(n-1)|f\nabla h|^2$ for all $x \in \Omega$, we have $\mathrm{Ric}_{\Omega}^g \ge K(n-1)$ because $f, h$ are also arbitrary.

The Sobolev to Lipschitz property is in the definition of $\RCD$ space. Moreover, as written previously, the $L^2$-strong compactness condition follows from the doubling condition and the Poincar\'e inequality, which are justified by the Bishop-Gromov inequality \cite{Sturm06} and by \cite{Rajala}.  
Finally, since the Lipschitz regularity on the eigenfunctions is satisfied by \cite{Jiang}, we conclude.
\end{proof}
\begin{corollary}[Another characterization of $\RCD$ condition]
Let $(X, \dist, \meas)$ be an $n$-dimensional almost smooth compact metric measure space associated with an open subset $\Omega$ of $X$, and let $K \in \mathbb{R}$.
Then, $(X, \dist, \meas)$ is a $\RCD(K(n-1), n)$ space if and only if the following four conditions hold:
\begin{enumerate}
\item $(X, \dist, \meas)$ is a PI space;
\item the induced distance $\dist_{\Ch}$ by the Cheeger energy is equal to the original distance $\dist$; 
\item any eigenfunction $f$ satisfies $|\nabla f| \in L^{\infty}(X, \meas)$;
\item $\mathrm{Ric}_{\Omega}^g\ge K(n-1)$ holds.
\end{enumerate}
\end{corollary}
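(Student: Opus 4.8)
The plan is to reduce everything to Corollary \ref{corrcd} by showing that, for an $n$-dimensional almost smooth compact metric measure space, the four conditions listed here are equivalent to the four conditions of Corollary \ref{corrcd}. Condition (4) is literally unchanged in both lists, so no work is needed there. The real content lies in trading the \emph{Sobolev to Lipschitz property} and the $L^2$-strong compactness condition of Corollary \ref{corrcd} for the coincidence $\dist_{\Ch}=\dist$ and the PI condition here, together with the elementary equivalence between ``eigenfunctions are Lipschitz'' and ``eigenfunctions have $|\nabla f|\in L^\infty$''.

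For the ``only if'' part I would assume $(X,\dist,\meas)$ is $\RCD(K(n-1),n)$ and read off the four conditions. Condition (4) and condition (1) (being a PI space) are obtained exactly as in the proof of Corollary \ref{corrcd}: the doubling property comes from the Bishop-Gromov inequality \cite{Sturm06} and the Poincaré inequality from \cite{Rajala}. Condition (3) follows since any eigenfunction is Lipschitz by \cite{Jiang}, hence has $|\nabla f|\in L^\infty(X,\meas)$. To get condition (2), note first that $\dist_{\Ch}\ge\dist$ always holds, because for fixed $y$ the function $\phi:=\dist(\cdot,y)$ is $1$-Lipschitz, so it lies in $C^0(X)\cap H^{1,2}(X,\dist,\meas)$ with $|\nabla\phi|\le 1$ $\meas$-a.e.\ and realizes $\phi(x)-\phi(y)=\dist(x,y)$. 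For the reverse inequality, any competitor $\phi$ in the supremum (\ref{honn}) has $|\nabla\phi|\le 1$ $\meas$-a.e., so the Sobolev to Lipschitz property (part of the definition of $\RCD$) makes $\phi$ a $1$-Lipschitz function, whence $\phi(x)-\phi(y)\le\dist(x,y)$; taking the supremum gives $\dist_{\Ch}\le\dist$ and therefore $\dist_{\Ch}=\dist$.

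For the ``if'' part I would verify the four hypotheses of Corollary \ref{corrcd} and then invoke it. Condition (4) is identical, and the $L^2$-strong compactness condition follows from condition (1), since every PI space enjoys it (see \cite{HK}). The crucial step is to manufacture the Sobolev to Lipschitz property out of conditions (1) and (2). I would argue as follows: given $f\in H^{1,2}(X,\dist,\meas)$ with $|\nabla f|\le 1$ $\meas$-a.e., the Morrey-type regularity available in PI spaces (Cheeger's theory, cf.\ Remark \ref{3332}) supplies a continuous, indeed locally Lipschitz, representative $\tilde f$ of $f$. Then $\tilde f\in C^0(X)\cap H^{1,2}(X,\dist,\meas)$ with $|\nabla\tilde f|\le 1$ $\meas$-a.e., so $\tilde f$ is an admissible competitor in (\ref{honn}); applying this to $\tilde f$ and to $-\tilde f$ and using condition (2) gives $\abs{\tilde f(x)-\tilde f(y)}\le\dist_{\Ch}(x,y)=\dist(x,y)$ for all $x,y$, i.e.\ $\tilde f$ is $1$-Lipschitz. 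This is precisely the Sobolev to Lipschitz property. Lipschitz regularity of eigenfunctions (condition (3) of Corollary \ref{corrcd}) then follows from condition (3) here by rescaling, applying the same argument to $f/L$ when $|\nabla f|\le L$. With all four hypotheses of Corollary \ref{corrcd} verified, $(X,\dist,\meas)$ is $\RCD(K(n-1),n)$.

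The main obstacle is the Sobolev to Lipschitz step in the ``if'' direction, specifically the passage from $|\nabla f|\le 1$ to the existence of a continuous representative. Without a global regularity input, the coincidence $\dist_{\Ch}=\dist$ is by itself too weak: Example \ref{ccd} exhibits a space in which $\dist_{\Ch}=\dist$ (and even $\BE(n-1,n)$) holds, yet the locally constant function $1_{\mathbb{S}^n_1(1)}-1_{\mathbb{S}^n_2(1)}$ has $|\nabla\phi|=0\le 1$ while admitting no continuous, let alone $1$-Lipschitz, representative. It is exactly the PI assumption (condition (1)) that rules out such pathologies, by forcing functions with bounded gradient to be locally Lipschitz and hence admissible competitors in (\ref{honn}); this is what closes the argument.
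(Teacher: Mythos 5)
Your proof is correct and follows essentially the same route as the paper's: the crucial step in both is to manufacture the Sobolev to Lipschitz property by combining the PI telescope argument (giving a continuous Lipschitz representative, cf.\ \cite{Cheeger}) with $\dist_{\Ch}=\dist$ (which, via the definition (\ref{honn}), upgrades any continuous Sobolev function with $|\nabla f|\le 1$ $\meas$-a.e.\ to a $1$-Lipschitz one; the paper cites \cite{AmbrosioErbarSavare} for this fact). The only differences are organizational: you route the conclusion through Corollary \ref{corrcd} (hence the rescaling trick to get Lipschitz eigenfunctions) where the paper invokes Theorem \ref{be} directly and then checks Sobolev to Lipschitz, and you spell out the ``only if'' proof of $\dist_{\Ch}=\dist$ from the Sobolev to Lipschitz property, a point the paper leaves implicit in its appeal to the proof of Corollary \ref{corrcd}.
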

\begin{proof}
Since the proof of `only if' part is same to that of Corollary \ref{corrcd}, let us check `if' part.
By Theorem \ref{be}, we see that $(X, \dist, \meas)$ is a $\BE(K(n-1), n)$ space.
Thus, it suffices to check the Sobolev to Lipschitz property.

Let $f \in H^{1, 2}(X, \dist, \meas)$ with $|\nabla f|(x) \le 1$ $\meas$-a.e. $x \in X$.
Then, the telescope argument with the PI condition (c.f. \cite{Cheeger}) yields that there exists $\hat{f} \in \mathrm{LIP}(X, \dist)$ such that $f(x)=\hat{f}(x)$ $\meas$-a.e. $x \in X$.
Then, since it is proved in \cite{AmbrosioErbarSavare} that 
\begin{itemize}
\item any $h \in H^{1, 2}(X, \dist, \meas) \cap C^0(X)$ with $|\nabla h| \le 1$ $\meas$-a.e. $x \in X$ is $1$-Lipschitz,
\end{itemize}
we see that $\hat{f}$ is $1$-Lipschitz, that is, the Sobolev-Lipschitz property holds.
Thus, we conclude.  
\end{proof}
\begin{remark}
We should mention that very recently, similar characterization of $\RCD$ conditions for \textit{stratified spaces}, which give almost smooth metric measure spaces as typical examples, is proved in \cite{BKMR}.
\end{remark}
We end this section by giving a sufficient condition to satisfy the Sobolev to Lipschitz property. For that, let us introduce the definition of the segment inequality:
\begin{definition}[Segment inequality]
Let $(Y, \dist, \nu)$ be a metric measure space satisfying that $(Y, \dist)$ is a geodesic space.
For a nonnegative valued Borel function $f$ on $Y$, define 
$$
\mathcal{F}_f(x, y) := \inf_{\gamma}\int_{[0, \dist (x, y)]}f(\gamma)\dist s, \quad \forall x, y \in Y,
$$
where the infimum runs over all minimal geodesics $\gamma$ from $x$ to $y$.
Then, we say that 
\textit{$(Y, \dist, \nu)$ satisfies the segment inequality} if there exists $\lambda>0$ such that 
$$\int_{B_r(x) \times B_r(x)}\mathcal{F}_f(y, z) \dist(\nu \times \nu) \le \lambda r \nu (B_r(x)) \int_{B_{\lambda r}(x)}f\dist \nu \quad \forall x \in Y, \forall r>0, \forall f.
$$
\end{definition}
Cheeger-Colding proved in \cite{CheegerColding3} that if $(Y, \dist, \nu)$ satisfies the volume doubling condition and the segment inequality, then, the $(1, 1)$-Poincar\'e inequality holds (see also \cite{CheegerColding} and \cite{HP}).
\begin{proposition}[Segment inequality with doubling condition implies Sobolev to Lipschitz property]
Let $(Y, \dist, \nu)$ be a compact metric measure space satisfying that $(Y, \dist)$ is a geodesic space.
Assume that $(Y, \dist, \nu)$ satisfies the volume doubling condition and the segment inequality. Then, the Sobolev to Lipschitz property holds.
\end{proposition}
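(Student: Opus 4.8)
The plan is to reduce the statement to the Poincaré inequality and then to rerun the argument already used in the proof of the previous corollary. First I would apply the quoted theorem of Cheeger--Colding \cite{CheegerColding3}: since $(Y,\dist)$ is geodesic and $(Y,\dist,\nu)$ satisfies the volume doubling condition together with the segment inequality, it satisfies the $(1,1)$-Poincaré inequality, hence by Hölder's inequality also the $(1,2)$-Poincaré inequality, so that $(Y,\dist,\nu)$ is a PI space. Given $f\in H^{1,2}(Y,\dist,\nu)$ with $|\nabla f|\le 1$ $\nu$-a.e., the telescope argument with the PI condition (c.f. \cite{Cheeger}) then produces $\hat f\in\mathrm{LIP}(Y,\dist)$ with $\hat f=f$ $\nu$-a.e.; in particular $\hat f$ is continuous and $|\nabla\hat f|=|\nabla f|\le 1$ $\nu$-a.e. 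The result of \cite{AmbrosioErbarSavare} recalled above, that every $h\in H^{1,2}(Y,\dist,\nu)\cap C^0(Y)$ with $|\nabla h|\le 1$ $\nu$-a.e. is $1$-Lipschitz, finishes the proof, since it shows $\hat f$ to be $1$-Lipschitz, which is exactly the Sobolev to Lipschitz property.

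It is worthwhile, however, to record the more hands-on argument that exhibits the role of the segment inequality and yields the sharp Lipschitz constant directly. After obtaining the Lipschitz representative $\hat f$ as above, I would use that on a PI space the slope coincides with the minimal relaxed gradient, so that $\mathrm{Lip}\hat f=|\nabla\hat f|\le 1$ $\nu$-a.e. (Cheeger \cite{Cheeger}), and that the local Lipschitz constant $\mathrm{Lip}\hat f$ of the Lipschitz function $\hat f$ is a genuine upper gradient (valid along every rectifiable curve). Set $g:=(\mathrm{Lip}\hat f-1)_+$, a nonnegative Borel function vanishing $\nu$-a.e. Choosing $r:=\mathrm{diam}(Y)$ and any $x\in Y$ so that $B_r(x)=Y$, the segment inequality gives
\[
\int_{Y\times Y}\mathcal{F}_g(y,z)\dist(\nu\times\nu)\le \lambda r\,\nu(Y)\int_{Y}g\dist\nu=0,
\]
whence $\mathcal{F}_g(y,z)=0$ for $\nu\times\nu$-a.e. $(y,z)\in Y\times Y$.

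For such a pair I would select unit-speed minimizing geodesics $\gamma_k$ from $y$ to $z$ with $\int_{\gamma_k}g\dist s\to\mathcal{F}_g(y,z)=0$; since $\mathrm{Lip}\hat f$ is an upper gradient and $\mathrm{Lip}\hat f\le 1+g$ pointwise, for every $k$ one has $|\hat f(y)-\hat f(z)|\le\int_{\gamma_k}\mathrm{Lip}\hat f\dist s\le\dist(y,z)+\int_{\gamma_k}g\dist s$, and letting $k\to\infty$ gives $|\hat f(y)-\hat f(z)|\le\dist(y,z)$ for $\nu\times\nu$-a.e. $(y,z)$. Because $\supp\nu=Y$, this full-measure set of pairs is dense in $Y\times Y$, so the continuity of $\hat f$ and of $\dist$ upgrades the bound to all $(y,z)$, proving that $\hat f$ is $1$-Lipschitz. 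The step I expect to be the main obstacle is precisely this passage from the $\nu$-almost-everywhere bound $\mathrm{Lip}\hat f\le 1$ to a bound along individual geodesics, which are $\nu$-null sets: the segment inequality is exactly the device that, for $\nu\times\nu$-almost every pair of endpoints, forces the existence of connecting geodesics that avoid the exceptional set $\{\mathrm{Lip}\hat f>1\}$ in the $\mathcal{H}^1$-averaged sense, after which compactness and the full support of $\nu$ dispose of the remaining null set of pairs.
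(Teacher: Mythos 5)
Your first argument --- the ``plan'' --- has a genuine gap. The statement you import from \cite{AmbrosioErbarSavare}, that every $h\in H^{1,2}(Y,\dist,\nu)\cap C^0(Y)$ with $|\nabla h|\le 1$ $\nu$-a.e.\ is $1$-Lipschitz \emph{with respect to $\dist$}, is not an unconditional theorem about compact metric measure spaces: by the definition (\ref{honn}) of $\dist_{\Ch}$, whose supremum ranges over exactly this class of functions, that statement is literally the inequality $\dist_{\Ch}\le\dist$. In the paper this citation is invoked only inside the proof of the corollary ``Another characterization of $\RCD$ condition'', where $\dist_{\Ch}=\dist$ is hypothesis (2); what the citation supplies is $1$-Lipschitz continuity with respect to $\dist_{\Ch}$, and it is that hypothesis which converts it into $1$-Lipschitz continuity with respect to $\dist$. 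In the present proposition no such hypothesis is available, and under PI (which is what produces the continuous representative via the telescope argument) the inequality $\dist_{\Ch}\le\dist$ is \emph{equivalent} to the Sobolev to Lipschitz property being proved --- so Part 1 assumes what is to be shown. Symptomatically, Part 1 uses the segment inequality only to obtain PI; but the telescope argument by itself only yields a $C$-Lipschitz representative with $C$ depending on the doubling and Poincar\'e constants, and the entire difficulty is upgrading $C$ to $1$. That upgrade is precisely the step the segment inequality is there to perform.

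Your second argument closes this gap correctly and is essentially the paper's own proof: the paper likewise takes the Lipschitz representative $\hat f$, notes $\mathrm{Lip}\hat f\le 1$ $\nu$-a.e.\ by Cheeger's identification of the slope \cite{Cheeger}, applies the segment inequality to $1_{Y\setminus A}$ with $A:=\{\mathrm{Lip}\hat f\le 1\}$ (your $g=(\mathrm{Lip}\hat f-1)_+$ plays the identical role, and even spares the factor $\sup_z\mathrm{Lip}\hat f(z)$ that the paper carries in its error term), integrates the upper gradient $\mathrm{Lip}\hat f$ along minimal geodesics that almost avoid the exceptional set, and finishes by density of the full-measure set of pairs together with continuity. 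One cosmetic remark: choose $r>\mathrm{diam}(Y)$ rather than $r=\mathrm{diam}(Y)$, so that the open ball $B_r(x)$ is all of $Y$. In summary, the proposal does contain a valid proof, but only because of Part 2; Part 1 should be deleted, or demoted to a remark valid under the additional hypothesis $\dist_{\Ch}\le\dist$.
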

\begin{proof}
Let $f \in H^{1, 2}(Y, \dist, \nu)$ with $|\nabla f| \le 1$ $\nu$-a.e.. As written above, since $(1, 1)$-Poincar\'e inequality is satisfied, $f$ has a representative in $\hat{f} \in \mathrm{LIP}(Y, \dist)$ by the telescope argument (see for instance \cite{Cheeger}).
Thus, since we have $|\mathrm{Lip}\hat{f}| \le 1$ $\nu$-a.e., which also follows from \cite{Cheeger}, it suffices to check that $\hat{f}$ is $1$-Lipschitz.

Let us take a Borel subset $A$ of $Y$ such that $\nu(Y \setminus A)=0$ and that $$\mathrm{Lip}\hat{f} \le 1, \quad \forall x \in A.$$
Applying the segment inequality for $1_{Y \setminus A}$ yields that there exists a Borel subset $B$ of $Y \times Y$ such that $(\nu \times \nu ) ((Y \times Y) \setminus B)=0$ and that for any $(x, y) \in B$ and any $\epsilon >0$, there exists a minimal geodesic $\gamma$ from $x$ to $y$ such that 
$$
\int_{[0, \dist (x, y)]}1_{Y \setminus A}(\gamma (s))\dist s<\epsilon.$$

Therefore, since $\mathrm{Lip}\hat{f}$ is an upper gradient of $\hat{f}$, we have
\begin{align}\label{21212121}
|\hat{f}(x)-\hat{f}(y)| &\le \int_{[0, \dist (x, y)]}\mathrm{Lip}\hat{f}(\gamma (s))\dist s \nonumber \\
&= \int_{[0, \dist (x, y)]} 1_{A}(\gamma (s))\mathrm{Lip}\hat{f}(\gamma (s))\dist s + \int_{[0, \dist (x, y)]} 1_{Y \setminus A}(\gamma (s))\mathrm{Lip}\hat{f}(\gamma (s))\dist s \nonumber \\
&\le \dist (x, y) + \sup_z \mathrm{Lip}\hat{f}(z) \epsilon.
\end{align}
Since $\epsilon$ is arbitrary and $B$ is dense in $Y \times Y$, (\ref{21212121}) yields that $\hat{f}$ is $1$-Lipschitz.
\end{proof}
\begin{remark}Let us give remarks on related works. Note that in the following, the spaces are not necessary compact. As we already used, Jiang proved in \cite{Jiang} the gradient estimates on solutions of Poisson's equations (including eigenfunctions) in the setting of metric measure spaces under assuming mild geometric conditions and a heat semigroup curvature condition (or called an weighted Sobolev inequality). Bamler and Chen-Wang proved in \cite{Bamler}, in \cite{ChenWang}, such conditions (including the segment inequality) in their almost smooth settings, independently.

One of interesting questions is; if an $n$-dimensional almost smooth (compact) metric measure space $(X, \dist, \meas)$ satisfies that the induced distance $\dist_g$ by $g$ on $\Omega$ coincides with $\dist|_{\Omega}$, then, being $\mathrm{Ric}_{\Omega}^g\ge K(n-1)$ is equivalent to that $(X, \dist, \meas)$ is a $\RCD(K(n-1), n)$-space?
\end{remark}
\subsection{Nonconstant dimensional case}
In this section, let us discuss a variant of $n$-dimensional almost smooth compact metric measure spaces. Let us recall that we fix a compact metric measure space $(X, \dist, \meas)$.
\begin{definition}[Generalized almost smooth compact metric measure space]
We say that $(X, \dist, \meas)$ is a \textit{generalized almost smooth compact metric measure space associated with an open subset $\Omega$ of $X$} if the following two conditions are satisfied;
\begin{enumerate}
\item{(Generalized smoothness of $\Omega$)} for all $p \in \Omega$, there exist an integer $n(p) \in \mathbb{N}$, an open neighborhood $U_p$ of $p$ in $\Omega$, an $n(p)$-dimensional (possibly incomplete) Riemannian manifold $(M^{n(p)}, g)$ and a map $\phi: U_p \to M^{n(p)}$ such that $\phi$ is a local isometry between $(U_p, \dist)$ and $(M^{n(p)}, \dist_g)$;
\item{(Hausdorff measure condition)} For all $p \in \Omega$, take $U_p$ as above. Then, the restriction of $\meas$ to $U_p$ coincides with the $n(p)$-dimensional Hausdorff measure $\mathcal{H}^{n(p)}$; 
\item{(Zero capacity condition)} $X \setminus \Omega$ has zero capacity in the sense of Definition \ref{def:asmm}.
\end{enumerate}
\end{definition}
By an argument similar to the proof of Theorem \ref{be}, we have the following:
\begin{theorem}[From $\mathrm{Ric}_{\Omega}^g \ge K$ to $\BE (K, N)$]
Let $(X, \dist, \meas)$ be a generalized almost smooth compact metric measure space associated with an open subset $\Omega$ of $X$.
Assume that $(X, \dist, \meas)$ satisfies the $L^2$-strong compactness condition, that each eigenfunction $\phi_i$
 satisfies $|\nabla \phi_i| \in L^{\infty}(X, \meas)$ and that $\mathrm{Ric}_{\Omega}^g\ge K$ for some $K \in \mathbb{R}$. Then, $(X, \dist, \meas)$ satisfies the $\BE(K, N)$-condition, where $N:=\sup_pn(p)$.
\end{theorem}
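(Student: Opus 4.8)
The plan is to mirror, essentially verbatim, the proof of Theorem~\ref{be}, replacing the single ambient dimension $n$ by the locally varying dimension $n(p)$ and the global bound $K(n-1)$ by the uniform bound $K$. First I would observe that all the structural results preceding Theorem~\ref{be} carry over: the analogue of Proposition~\ref{fun} (infinitesimal Hilbertianity and the isometric inclusion $H^{1,2}_0(\Omega)\hookrightarrow H^{1,2}(X)$) depends only on the zero capacity condition and the local smoothness, both of which are built into the definition of a generalized almost smooth space. Hence $(X,\dist,\meas)$ is infinitesimally Hilbertian, the cut-off functions $\phi_i$ satisfy $\phi_i\to 1$ in $L^2$ with $\nabla\phi_i\weakto 0$, and the eigenfunction expansions \eqref{ee}, \eqref{ee2} remain valid under the $L^2$-strong compactness hypothesis.

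Next I would fix $f\in D(\Delta)$ with $\Delta f\in H^{1,2}$ and form the truncated spectral sums $f_N=\sum_{i}^N a_i\phi_i^X$, so that $f_N,\Delta f_N\to f,\Delta f$ in $H^{1,2}$. The key local input is the pointwise Bochner inequality on $\Omega$: at each $p\in\Omega$ the metric is smooth of dimension $n(p)$, and $\mathrm{Ric}_\Omega^g\ge K$ gives
\begin{equation*}
\tfrac12\Delta|\nabla f_N|^2 \ge |\mathrm{Hess}_{f_N}|^2 + \langle\nabla\Delta f_N,\nabla f_N\rangle + K|\nabla f_N|^2 \quad \forall x\in\Omega.
\end{equation*}
This is the only place the dimension enters, and it enters harmlessly: the Hessian term $|\mathrm{Hess}_{f_N}|^2$ is defined pointwise on each smooth chart regardless of the local dimension, and the curvature bound is already uniform. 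The argument establishing $|\nabla f_N|^2\in H^{1,2}(X)$---testing against $\phi_i$, using $|\nabla f_N|\in L^\infty$ together with the zero capacity bound \eqref{uuh} to control $\limsup_i\int_\Omega\phi_i|\mathrm{Hess}_{f_N}|^2$, then invoking monotone convergence to get $\int_\Omega|\mathrm{Hess}_{f_N}|^2<\infty$---goes through word for word, since no step used the constancy of $n$.

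Then I would run the final integration exactly as in Theorem~\ref{be}: multiply the pointwise inequality by $\phi\phi_i$ for a test function $\phi\in D(\Delta)\cap L^\infty$ with $\Delta\phi\in L^\infty$ and $\phi\ge0$, integrate over $X$, and let $i\to\infty$ using $\phi_i\to1$ and $\nabla\phi_i\weakto0$ to recover $\tfrac12\int_X|\nabla f_N|^2\Delta\phi$. The only genuinely new point is the dimensional term on the right: I would bound $|\mathrm{Hess}_{f_N}|^2\ge (\Delta f_N)^2/n(p)\ge (\Delta f_N)^2/N$ pointwise, where $N:=\sup_p n(p)$, so that the $N$-Bochner defect is absorbed uniformly before integrating. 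Letting $N\to\infty$ (the spectral truncation index) then yields \eqref{eq:boch} with constants $K$ and $N$.

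The main obstacle I anticipate is not analytic but definitional: one must confirm that $N=\sup_p n(p)$ is finite, or else interpret $(\Delta f)^2/N$ as zero when $N=\infty$, in which case the statement degenerates to a $\BE(K,\infty)$ bound and the dimensional term drops out harmlessly. A secondary subtlety is verifying that $|\mathrm{Hess}_{f_N}|^2$ is globally integrable and that the local dimension function $n(p)$ is locally constant (so the charts patch consistently and the Hausdorff measure condition is unambiguous on overlaps); both follow from the local isometry structure, since a connected chart has a single well-defined dimension. Once these bookkeeping points are settled, the estimate is an entirely faithful transcription of the constant-dimensional argument.
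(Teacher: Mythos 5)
Your proposal is correct and follows essentially the same route as the paper: the paper's own proof of this theorem is literally the remark that the argument of Theorem~\ref{be} applies verbatim, with the pointwise Cauchy--Schwarz bound $|\mathrm{Hess}_{f_N}|^2 \ge (\Delta f_N)^2/n(p) \ge (\Delta f_N)^2/\sup_p n(p)$ replacing the constant-dimensional step, exactly as you describe. If anything, you supply bookkeeping the paper leaves implicit (the interpretation when $\sup_p n(p)=\infty$, and the local constancy of $n(p)$ on connected charts), which is a welcome addition rather than a deviation.
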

\begin{example}\label{1091}
By an argument similar to that in Example \ref{ccd}, we can easily see that for any two (not necessary same dimensional) closed pointed Riemannian manifolds $(M_i^{m_i}, g_i, p_i) (m_i \ge 2)$ with $\mathrm{Ric}_{M_i^{m_i}}^{g_i} \ge K$ for some $K \in \mathbb{R}$,  the metric measure space 
$$
\left(M_1^{m_1} * M_2^{m_2}, \dist, \mathcal{H}^{m_1}\res_{M_1^{m_1}} + \mathcal{H}^{m_2}\res_{M_2^{m_2}}\right)
$$
is a $\BE(K, \max \{m_1, m_2\})$ space with $\dist_{\Ch}=\dist$. 

More generally, similar constructions of $\BE(K, N)$ spaces by gluing embedded closed convex submanifolds $N_i^{n_i} \subset M_i^{m_i}$, which are isometric to each other, with $m_i-n_i\ge 2$ are also justified. 
\end{example}
\begin{remark} In this paper we discuss only the \textit{unweighted} case, that is, the restriction of the reference measure to the smooth part is the optimal Hausdorff measure $\mathcal{H}^n$.
Similar results are also obtained in the weighted case, $e^{-f}\dist \mathcal{H}^n$, where $f \in C^{\infty}(\Omega)$, under suitable assumptions on $f$ by using the Bakry-\'Emery ($N$-) Ricci tensor (and the Witten Laplacian $\Delta_f$, respectively) instead of using the original Ricci tensor (and the Laplacian $\Delta$, respectively).
However, we do not discuss the details because our main forcus is to discuss some `flexibility' on the $\BE(K, N)$ conditions as in Example \ref{1091}, which is very different from the $\RCD(K, N)$ conditions, and to give a bridge between almost smooth spaces and noncollapsed $\RCD$ spaces introduced in \cite{PhGi}, as in Corollary \ref{corrcd}.
\end{remark}
\section{Appendix}

Let $(Y, \dist, \nu)$ be an infinitesimally Hilbertian compact metric measure space and assume that $(Y, \dist, \nu)$ satisfies the $L^2$-strong compactness condition with $\dim L^2(Y, \nu) =\infty$. In this appendix, we will show that the spectrum of $-\Delta$ is discrete and unbounded, and that (\ref{ee}) and (\ref{ee2}) hold. 

Let us begin with proving the following lemma:
\begin{lemma}\label{ww3}
For all $\lambda \in \mathbb{R}$, let $E(\lambda):=\{ f \in D(\Delta);-\Delta f=\lambda f\}$.
\begin{enumerate}
\item If $\dim E(\lambda) \ge 1$, then $\lambda \ge 0$ (which is called an eigenvalue of $-\Delta$).
\item $\dim E(\lambda)<\infty$ holds.
\end{enumerate}
\end{lemma}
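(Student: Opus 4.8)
The plan is to treat the two statements separately, invoking only the defining integration-by-parts property of $\Delta$ for (1), and additionally the $L^2$-strong compactness hypothesis for (2).

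For (1), I would take any $f \in E(\lambda)$ with $f \neq 0$ and test the defining identity of the Laplacian against $h = f$. Recall that $f \in D(\Delta)$ means $\int_Y h \Delta f \dist\nu = -\int_Y \Gamma(f, h)\dist\nu$ for all $h \in H^{1,2}(Y, \dist, \nu)$; this choice is legitimate since $E(\lambda) \subset D(\Delta) \subset H^{1,2}(Y, \dist, \nu)$, so $f$ is itself an admissible test function. Taking $h = f$ gives $\int_Y f \Delta f \dist\nu = -\int_Y |\nabla f|^2 \dist\nu \le 0$. On the other hand $-\Delta f = \lambda f$ forces $\int_Y f \Delta f \dist\nu = -\lambda \|f\|_{L^2}^2$. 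Comparing the two identities yields $\lambda \|f\|_{L^2}^2 = \int_Y |\nabla f|^2 \dist\nu \ge 0$, and since $\|f\|_{L^2} > 0$ we conclude $\lambda \ge 0$.

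For (2), I would argue by contradiction. If $\dim E(\lambda) = \infty$, I would choose an $L^2$-orthonormal sequence $\{f_k\}_k \subset E(\lambda)$. The same integration by parts as above shows $\int_Y |\nabla f_k|^2 \dist\nu = \lambda$ for every $k$, whence $\|f_k\|_{H^{1,2}}^2 = 1 + \lambda$ is uniformly bounded. The $L^2$-strong compactness condition then provides a subsequence of $\{f_k\}_k$ converging in $L^2(Y, \nu)$. But distinct members of an orthonormal family satisfy $\|f_k - f_{k'}\|_{L^2}^2 = 2$, so no subsequence can be $L^2$-Cauchy, a contradiction. Hence $\dim E(\lambda) < \infty$.

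I do not expect a genuine obstacle in either step; the only points requiring care are the correct invocation of the weak definition of $\Delta$ (in particular that an eigenfunction is an admissible test function against itself, which relies on $E(\lambda)\subset H^{1,2}(Y,\dist,\nu)$) and the observation that an eigenfunction automatically has Cheeger energy controlled by its eigenvalue. This last point is precisely the link that couples the spectral problem to the compact embedding $H^{1,2}(Y, \dist, \nu) \hookrightarrow L^2(Y, \nu)$, and it is what makes the compactness hypothesis bite in part (2).
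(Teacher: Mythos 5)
Your proof is correct. Part (1) is exactly the paper's argument: test the eigenvalue equation against $f$ itself to get $\lambda = \int_Y |\nabla f|^2 \dist\nu / \int_Y |f|^2 \dist\nu \ge 0$. For part (2) you reach the same conclusion from the same two ingredients (the uniform bound $\|f\|_{H^{1,2}}^2 = (1+\lambda)\|f\|_{L^2}^2$ on eigenfunctions, plus the compact embedding $H^{1,2} \hookrightarrow L^2$), but by a shorter route: you extract an $L^2$-orthonormal sequence from a putative infinite-dimensional $E(\lambda)$ and observe it can have no $L^2$-convergent subsequence since $\|f_k - f_{k'}\|_{L^2}^2 = 2$. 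The paper instead first proves that $E(\lambda)$ is closed in $L^2$ (a Cauchy sequence of eigenfunctions is $H^{1,2}$-bounded, so by Mazur's lemma the limit lies in $H^{1,2}$ and one can pass to the limit in the weak formulation), then shows the unit sphere $S(\lambda)$ is compact, and concludes by the Riesz-type fact that a space with compact unit sphere is finite-dimensional. Your version is more elementary --- it needs neither Mazur's lemma nor the closedness of $E(\lambda)$, since an orthonormal sequence exists in any infinite-dimensional inner product space by Gram--Schmidt. What the paper's detour buys is the limit-passing argument itself: the proof that $L^2$-limits of $H^{1,2}$-bounded eigenfunctions are again eigenfunctions is reused verbatim in the next lemma (discreteness of the spectrum, Lemma \ref{ww2}), so the paper's structure amortizes that work, whereas with your proof that argument would still have to be supplied there.
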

\begin{proof}
Let us check (1). Taking $f \in E(\lambda)$ with $f \neq 0$ in $L^2(Y, \nu)$ yields
$
\lambda =\frac{\int_Y|\nabla f|^2\dist \nu}{\int_Y|f|^2\dist \nu} \ge 0.
$

To prove (2), with no loss of generality, we can assume $\dim E(\lambda) \ge 1$.
Let us check $(E(\lambda), \| \cdot \|_{L^2})$ is a Hilbert space.
Take a Cauchy sequence $f_i$ in $E(\lambda)$. Let $f \in L^2(Y, \nu)$ be the $L^2$-strong limit function.
Since $\|\nabla f_i\|_{L^2}^2=\lambda \|f_i\|_{L^2}^2$, $f_i$ is a bounded sequence in $H^{1, 2}(Y, \dist, \nu)$. Thus, Mazur's lemma shows that $f \in H^{1, 2}(Y, \dist, \nu)$ and that $f_i$ converge weaky to $f$ in $H^{1, 2}(Y, \dist, \nu)$.
Therefore, letting $i \to \infty$ in
$$
\int_Y\langle \nabla f_i, \nabla g\rangle \dist \nu=\lambda \int_Yf_ig\dist \nu \quad \forall g \in H^{1, 2}(Y, \dist, \nu)
$$
yields
$$
\int_Y\langle \nabla f, \nabla g\rangle \dist \nu=\lambda \int_Yfg\dist \nu \quad \forall g \in H^{1, 2}(Y, \dist, \nu)
$$
which shows $f \in E(\lambda)$, where the convergence of the left hand sides comes from the polarization. Thus, $(E(\lambda), \| \cdot \|_{L^2})$ is a Hilbert space.

Then, similar argument with the $L^2$-strong compactness condition allows us to prove that $S(\lambda)$ is a compact subset of $E(\lambda)$, where $S(\lambda):=\{f \in E(\lambda); \|f\|_{L^2}=1\}$. Therefore, $\dim E(\lambda)<\infty$. 
\end{proof} 
\begin{lemma}\label{ww2}
The set $\mathcal{E}(Y, \dist, \nu):=\{\lambda \in \mathbb{R}_{\ge 0}; \dim E(\lambda) \ge 1\}$ is discrete.
\end{lemma}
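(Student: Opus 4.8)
The plan is to argue by contradiction, exploiting the $L^2$-strong compactness condition together with the nonnegativity of eigenvalues established in Lemma \ref{ww3}. Suppose $\mathcal{E}(Y, \dist, \nu)$ were not discrete. Then it would admit a finite accumulation point, so one could select a sequence of \emph{pairwise distinct} eigenvalues $\lambda_i \in \mathcal{E}(Y, \dist, \nu)$ converging to some $\lambda_0 \in [0, \infty)$; in particular $\sup_i \lambda_i < \infty$, which is the feature that makes the argument work.

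First I would fix, for each $i$, an eigenfunction $f_i \in E(\lambda_i)$ with $\|f_i\|_{L^2} = 1$. The key structural observation is that eigenfunctions attached to distinct eigenvalues are $L^2$-orthogonal: for $i \neq j$, testing the weak eigenvalue equation for $f_i$ against $f_j$ and using the symmetry of the carr\'e du champ yields
$$
\lambda_i \int_Y f_i f_j \dist \nu = \int_Y \langle \nabla f_i, \nabla f_j\rangle \dist \nu = \lambda_j \int_Y f_i f_j \dist \nu,
$$
whence $(\lambda_i - \lambda_j)\int_Y f_i f_j \dist \nu = 0$ and therefore $\int_Y f_i f_j \dist \nu = 0$. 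Thus $\{f_i\}_i$ is an $L^2$-orthonormal system.

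Next I would produce a uniform Sobolev bound. From $\|\nabla f_i\|_{L^2}^2 = \lambda_i \|f_i\|_{L^2}^2 = \lambda_i$ we obtain
$$
\|f_i\|_{H^{1,2}}^2 = 1 + \lambda_i \le 1 + \sup_j \lambda_j < \infty,
$$
so $\{f_i\}_i$ is bounded in $H^{1,2}(Y, \dist, \nu)$. The $L^2$-strong compactness condition then furnishes a subsequence converging strongly in $L^2(Y, \nu)$. This is the contradiction we seek: an orthonormal system satisfies $\|f_i - f_j\|_{L^2}^2 = 2$ for all $i \neq j$, so no subsequence can be $L^2$-Cauchy. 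Hence $\mathcal{E}(Y, \dist, \nu)$ has no finite accumulation point and is therefore discrete.

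I do not expect a genuine obstacle here; this is the standard compact-resolvent mechanism rendered explicit, in the spirit of the proof of Lemma \ref{ww3}. The only points requiring a little care are the orthogonality computation, which relies solely on the symmetry of $\Gamma$ and the weak formulation of the eigenvalue equation (both available on infinitesimally Hilbertian spaces), and the remark that the accumulation point $\lambda_0$ is automatically finite, so that the $H^{1,2}$-bound is genuinely uniform. Note that discreteness leaves room for $\mathcal{E}(Y,\dist,\nu)$ to be infinite with accumulation only at $+\infty$, which is exactly what one expects when $\dim L^2(Y,\nu) = \infty$.
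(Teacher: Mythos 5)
Your proof is correct. It shares its skeleton with the paper's argument --- both assume a sequence of pairwise distinct eigenvalues $\lambda_i$ converging to a finite limit, take unit-norm eigenfunctions $f_i \in E(\lambda_i)$, observe the uniform bound $\|f_i\|_{H^{1,2}}^2 = 1+\lambda_i$, and invoke the $L^2$-strong compactness condition --- but the contradiction is reached differently. The paper extracts the $L^2$-strong limit $f$ of the $f_i$, shows by passing to the limit in the weak eigenvalue equation (as in the proof of part (2) of Lemma \ref{ww3}) that $f \in E(\lambda)$ with $\|f\|_{L^2}=1$, and then contradicts the finite-dimensionality of $E(\lambda)$: for an ONB $\{g_j\}$ of $E(\lambda)$ one has $g_j \perp f_i$, hence $g_j \perp f$ in the limit, yet $f$ is a unit vector of $E(\lambda)$. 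Your route is shorter: having proved that eigenfunctions of distinct eigenvalues are $L^2$-orthogonal (the same fact the paper uses implicitly when asserting $g_j \perp f_i$), you note that the orthonormal system $\{f_i\}$ satisfies $\|f_i - f_j\|_{L^2}^2 = 2$ for $i \neq j$, so no subsequence can be $L^2$-Cauchy, contradicting compactness directly. This dispenses with both the limit-identification step and the finite-dimensionality statement of Lemma \ref{ww3}; the paper's variant, on the other hand, reuses the machinery already set up for Lemma \ref{ww3} and yields as a by-product that $L^2$-limits of eigenfunctions with converging eigenvalues are again eigenfunctions. Both arguments in fact establish the stronger statement needed for the enumeration of eigenvalues that follows, namely that $\mathcal{E}(Y,\dist,\nu)$ has no finite accumulation point at all, not merely that each of its points is isolated.
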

\begin{proof}
The proof is done by contradiction. Assume that there exists a sequence $\lambda_i \in \mathcal{E}(Y, \dist, \nu)$ such that $\lambda_i \neq \lambda_j (i \neq j)$ and that $\lambda_i \to \lambda \in \mathbb{R}$.  
Take $f_i \in E(\lambda_i)$ with $\|f_i\|_{L^2}=1$. Then, since $\|f_i\|_{H^{1, 2}}^2=\lambda_i$, by the $L^2$-strong compactness condition, with no loss of generality, we can assume that there exists the $L^2$-strong limit function $f$ of $f_i$. Thus, $\|f\|_{L^2}=1$. 
Moreover, similar argument as in the proof of (2) of Lemma \ref{ww3} shows $f \in E(\lambda)$. In particular, $\lambda$ is an eigenvalue of $-\Delta$. Let $\{g_j\}_{j=1, 2, \ldots, N}$ be an ONB of $E(\lambda)$. Since $g_j \perp f_i$ in $L^2(Y, \nu)$, letting $i \to \infty$ yields $g_j \perp f$. Therefore, $\{g_j\}_j \cup \{f\}$ are linearly independent in $E(\lambda )$, which contradicts that $\{g_j\}_j$ is a basis of $E(\lambda)$.
\end{proof}
\begin{lemma}\label{ww4}
The set $\mathcal{E}(Y, \dist, \nu)$ is unbounded.
\end{lemma}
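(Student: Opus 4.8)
The plan is to argue by contradiction through the spectral theorem for compact self-adjoint operators. Suppose $\mathcal{E}(Y, \dist, \nu)$ were bounded. By Lemma \ref{ww2} it is discrete, hence finite, and by Lemma \ref{ww3} each eigenspace is finite dimensional; so the closed span of all eigenfunctions of $-\Delta$ would be a finite dimensional subspace of $L^2(Y, \nu)$. Since $\dim L^2(Y, \nu)=\infty$, it therefore suffices to produce a \emph{complete} orthonormal system of eigenfunctions: infinitely many eigenvalues are then forced, and together with discreteness and finite multiplicity they must escape to $+\infty$. The cleanest route to such completeness is to introduce the resolvent and read off the spectrum of $-\Delta$ from it.

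Concretely, write $A := -\Delta$ on $D(\Delta)$; it is nonnegative and symmetric, as already used in Lemma \ref{ww3}. I would define $T := (\Id + A)^{-1}$ by solving, for given $f \in L^2(Y, \nu)$, the weak problem
$$
\int_Y u h \dist \nu + \int_Y \langle \nabla u, \nabla h\rangle \dist \nu = \int_Y f h \dist \nu \qquad \forall h \in H^{1, 2}(Y, \dist, \nu).
$$
Here infinitesimal Hilbertianity is exactly what makes the left-hand side the $H^{1, 2}$ inner product of $u$ and $h$; the Lax--Milgram (equivalently, Riesz representation) theorem then yields a unique solution $u =: Tf \in H^{1, 2}(Y, \dist, \nu) \cap D(\Delta)$ with $\|Tf\|_{H^{1, 2}} \le \|f\|_{L^2}$. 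Taking $h = u$ gives $\langle Tf, f\rangle_{L^2} = \|u\|_{H^{1, 2}}^2 \ge 0$ with equality only for $f = 0$, so $T$ is positive and injective; symmetry of the defining bilinear form moreover makes $T$ self-adjoint on $L^2(Y, \nu)$.

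The decisive structural input is compactness: $T$ factors as the bounded map $L^2(Y, \nu) \to H^{1, 2}(Y, \dist, \nu)$ just constructed, followed by the inclusion $H^{1, 2}(Y, \dist, \nu) \hookrightarrow L^2(Y, \nu)$, which is compact precisely by the $L^2$-strong compactness assumption. Hence $T$ is a compact self-adjoint operator, and the spectral theorem supplies an orthonormal basis $\{e_k\}$ of $L^2(Y, \nu)$ consisting of eigenvectors of $T$; injectivity of $T$ rules out the eigenvalue $0$, so every $e_k$ has eigenvalue $\mu_k > 0$. The identity $T e_k = \mu_k e_k$ rearranges to $A e_k = (\mu_k^{-1} - 1) e_k$, so each $e_k$ is an eigenfunction of $-\Delta$ with eigenvalue $\lambda_k := \mu_k^{-1} - 1$, which establishes the desired completeness.

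Finally, since $\dim L^2(Y, \nu) = \infty$ the family $\{\mu_k\}$ is infinite, and for a compact operator the nonzero eigenvalues can accumulate only at $0$; thus $\mu_k \to 0^+$ and $\lambda_k = \mu_k^{-1} - 1 \to +\infty$. This exhibits eigenvalues of $-\Delta$ of arbitrarily large size, contradicting the assumed boundedness of $\mathcal{E}(Y, \dist, \nu)$. I expect the main obstacle to be the careful setup of $T$ rather than the final limiting argument: one must invoke infinitesimal Hilbertianity to identify the defining bilinear form with the $H^{1, 2}$ inner product, so that $T$ is well defined and self-adjoint, and then the $L^2$-strong compactness condition to upgrade boundedness of $T$ into $L^2$ to genuine compactness. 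The passage between eigenfunctions of $T$ and of $-\Delta$ is then purely algebraic.
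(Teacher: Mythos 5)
Your proof is correct, and it takes a genuinely different route from the paper's. The paper argues variationally: assuming $\mathcal{E}(Y, \dist, \nu)$ is bounded, Lemmas \ref{ww2} and \ref{ww3} make $V:=\bigoplus_{\lambda \in \mathcal{E}(Y, \dist, \nu)}E(\lambda)$ finite dimensional; since $\dim L^2(Y, \nu)=\infty$, the orthogonal complement of $V$ is nontrivial, and minimizing the Rayleigh quotient over $\{f \perp V\}$ (direct method, using the $L^2$-strong compactness condition and lower semicontinuity of $\Ch$) produces an eigenfunction $f_*$, which must then lie in $V$ while also being orthogonal to $V$ --- a contradiction. You instead construct the resolvent $T=(\Id-\Delta)^{-1}$ by Riesz representation in the Hilbert space $H^{1,2}(Y, \dist, \nu)$ (this is precisely where infinitesimal Hilbertianity enters, as you note), check that $T$ is self-adjoint, positive, injective and compact (bounded $L^2 \to H^{1,2}$ composed with the compact embedding), and invoke the spectral theorem for compact self-adjoint operators; the algebraic passage $Te_k=\mu_k e_k \Leftrightarrow -\Delta e_k = (\mu_k^{-1}-1)e_k$ is consistent with the paper's weak definition of $\Delta$, so this step is sound. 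Both arguments consume the same two hypotheses ($L^2$-strong compactness and $\dim L^2(Y,\nu)=\infty$) at the same junctures. What your route buys: it delivers in one stroke a complete orthonormal basis of eigenfunctions, i.e., essentially the expansions (\ref{ee}) and (\ref{ee2}) that the paper proves separately in Proposition \ref{ww5} by yet another variational argument, so your approach would streamline the whole appendix. What the paper's route buys: it is more elementary and self-contained, reusing exactly the direct-method compactness argument already set up in Lemmas \ref{ww2} and \ref{ww3}, with no need to verify the operator-theoretic package. One small point you should make explicit: injectivity of $T$ (your ``equality only for $f=0$'' claim) requires density of $H^{1,2}(Y, \dist, \nu)$ in $L^2(Y, \nu)$, which holds here because Lipschitz functions are dense in $L^2$ on a compact metric measure space.
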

\begin{proof}
Note that since $1 \in E(0)$, we have $\mathcal{E}(Y, \dist, \nu) \neq \emptyset$.

Assume that $\mathcal{E}(Y, \dist, \nu)$ is bounded. 
Then, Lemma \ref{ww2} yields that $\mathcal{E}(Y, \dist, \nu)$ is a finite set.
By Lemma \ref{ww3}, there exists an ONB, $\{f_i\}_{i=1, 2,\ldots, N}$, of
$
\bigoplus_{\lambda \in \mathcal{E}(Y, \dist, \nu)}E(\lambda) (=:V).
$

On the other hand, it is easy to see that the number
$$
\lambda_*:=\inf_{f \perp V}\frac{\int_Y|\nabla f|^2\dist \nu}{\int_Y|f|^2\dist \nu}
$$
is also an eigenvalue of $-\Delta$ and that there exists a minimizer $f_*$ of the right hand side with $f_* \in E(\lambda_*)$ and $\|f_*\|_{L^2}=1$, where we used our assumption, $\dim L^2(Y, \nu)=\infty$, to make sence in the infimum. 
Thus, since $f_* \in V$ and $f_* \perp V$, we have $f_*=0$, which contradicts that $\|f_*\|_{L^2}=1$.
\end{proof}
Lemmas \ref{ww2} and \ref{ww4} allow us to denote the eigenvalues of $-\Delta$ by
$$
0=\lambda_1(Y, \dist, \nu) \le \lambda_2(Y,\dist, \nu) \le \cdots \to \infty
$$
counted with multiplicities.
Fix the corresponding eigenfunctions by $\phi_i^Y$ with $\|\phi_i^Y\|_{L^2}=1$.
\begin{proposition}\label{ww5}
For all $f \in L^2(Y, \nu)$, we have (\ref{ee}).
\end{proposition}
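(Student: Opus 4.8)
The plan is to show that the $L^2$-closed subspace $V \subseteq L^2(Y,\nu)$ spanned by the eigenfunctions $\{\phi_i^Y\}_i$ is all of $L^2(Y,\nu)$, arguing by contradiction: if $V \neq L^2(Y,\nu)$, a variational argument parallel to that of Lemma \ref{ww4} produces a new eigenfunction orthogonal to every $\phi_i^Y$, which is absurd. First I would record the elementary half. For $f \in L^2(Y,\nu)$ put $c_i := \int_Y f\phi_i^Y\dist\nu$; since $\{\phi_i^Y\}_i$ is orthonormal, Bessel's inequality gives $\sum_i c_i^2 \le \|f\|_{L^2}^2$, so the partial sums $\sum_{i\le N} c_i\phi_i^Y$ form an $L^2$-Cauchy sequence converging to some $g \in V$. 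Setting $r := f - g$, one has $\int_Y r\phi_i^Y\dist\nu = 0$ for every $i$, i.e.\ $r \in V^\perp$; proving (\ref{ee}) amounts to proving $r = 0$.

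Next, assuming $r \ne 0$, I would upgrade this to the existence of a \emph{nonzero} element of $V^\perp \cap H^{1,2}(Y,\dist,\nu)$. Staying within the tools already used, the cleanest route is an $H^{1,2}$-orthogonal splitting: for $h \in H^{1,2}(Y,\dist,\nu)$ with coefficients $c_i = \int_Y h\phi_i^Y\dist\nu$, expanding $0 \le 2\Ch\big(h - \sum_{i\le N} c_i\phi_i^Y\big)$ and using $\int_Y\langle\nabla h,\nabla\phi_i^Y\rangle\dist\nu = \lambda_i c_i$ together with $\int_Y\langle\nabla\phi_i^Y,\nabla\phi_j^Y\rangle\dist\nu = \lambda_i\delta_{ij}$ yields $\sum_{i\le N}\lambda_i c_i^2 \le 2\Ch(h)$. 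Hence $\sum_i c_i\phi_i^Y$ in fact converges in $H^{1,2}(Y,\dist,\nu)$, so its limit $g_h$ and the remainder $h - g_h$ both lie in $H^{1,2}$, with $h - g_h \in V^\perp$. Since $H^{1,2}(Y,\dist,\nu)$ is $L^2$-dense and $V$ is a proper closed subspace, some $h \in H^{1,2}$ is not in $V$, and then $h - g_h$ is the desired nonzero element of $V^\perp \cap H^{1,2}$. (Alternatively one could smooth any $0\ne\rho\in V^\perp$ by the resolvent $(\Id - \Delta)^{-1}$, which preserves $V^\perp$, but the splitting avoids introducing new machinery.)

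With $V^\perp \cap H^{1,2} \ne \{0\}$ in hand, I would run the minimization exactly as in Lemmas \ref{ww3} and \ref{ww4}: set $\lambda_* := \inf\{2\Ch(u) : u \in V^\perp\cap H^{1,2},\ \|u\|_{L^2}=1\}$, take a minimizing sequence $u_j$, note $\sup_j\|u_j\|_{H^{1,2}}<\infty$, and use the $L^2$-strong compactness condition together with Mazur's lemma and the lower semicontinuity of $\Ch$ to extract an $L^2$-limit $u_* \in V^\perp\cap H^{1,2}$ with $\|u_*\|_{L^2}=1$ and $2\Ch(u_*)=\lambda_*$. The Euler--Lagrange equation for this constrained minimizer reads $\int_Y\langle\nabla u_*,\nabla v\rangle\dist\nu = \lambda_*\int_Y u_* v\dist\nu$ for all $v \in V^\perp\cap H^{1,2}$; since for $v=\phi_i^Y$ both sides vanish (because $\int_Y\langle\nabla u_*,\nabla\phi_i^Y\rangle\dist\nu=\lambda_i\int_Y u_*\phi_i^Y\dist\nu=0$ and $u_*\in V^\perp$), the identity extends to all $v\in H^{1,2}$ by the splitting above. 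Thus $u_* \in D(\Delta)$ with $-\Delta u_* = \lambda_* u_*$, i.e.\ $u_* \in E(\lambda_*) \subseteq V$; but $u_* \in V^\perp$ then forces $u_*=0$, contradicting $\|u_*\|_{L^2}=1$, so in fact $r=0$ and (\ref{ee}) holds.

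The main obstacle I expect is the smoothing step, namely producing a nonzero element of $V^\perp \cap H^{1,2}$, since $r$ a priori lives only in $L^2$; this is precisely where ``having enough eigenfunctions'' must be extracted, and the coefficient bound $\sum_i \lambda_i c_i^2 \le 2\Ch(h)$ is its technical heart. The attainment of $\lambda_*$ and the verification that the constrained minimizer is a genuine eigenfunction are then routine given Lemmas \ref{ww3}--\ref{ww4}, the only subtlety being the use of the $H^{1,2}$-splitting to pass from admissible variations in $V^\perp\cap H^{1,2}$ to test functions ranging over all of $H^{1,2}(Y,\dist,\nu)$.
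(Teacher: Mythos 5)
Your proof is correct, but it follows a genuinely different route from the paper's. The paper argues directly and quantitatively: for $f \in H^{1,2}(Y,\dist,\nu)$ it notes that the tail $g_N := f - f_N$ is orthogonal to $V_N := \mathrm{span}\{\phi_1^Y,\dots,\phi_N^Y\}$, invokes the min--max identity $\inf\{2\Ch(h)/\|h\|_{L^2}^2 :\ h \perp V_N\} = \lambda_{N+1}(Y,\dist,\nu)$ (whose verification is the finite-codimension version of your minimization), and combines $\|\nabla g_N\|_{L^2}\le \|\nabla f\|_{L^2}$ with the unboundedness of the spectrum (Lemma \ref{ww4}) to obtain the explicit rate
$$
\|g_N\|_{L^2}^2 \le \frac{\|\nabla f\|_{L^2}^2}{\lambda_{N+1}(Y,\dist,\nu)} \longrightarrow 0;
$$
the general case $f \in L^2(Y,\nu)$ then follows by $H^{1,2}$-density and a three-epsilon argument using Bessel's inequality. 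You instead argue qualitatively, by contradiction: if the closed span $V$ of all eigenfunctions were proper, your $H^{1,2}$-splitting (resting on the estimate $\sum_i \lambda_i c_i^2 \le 2\Ch(h)$) produces a nonzero element of $V^\perp \cap H^{1,2}$, and minimizing the Rayleigh quotient over $V^\perp\cap H^{1,2}$ yields a new eigenfunction orthogonal to every $\phi_i^Y$, hence orthogonal to its own eigenspace $E(\lambda_*)$ (whose orthonormal basis is among the $\phi_i^Y$), hence zero --- a contradiction. Your route never uses Lemma \ref{ww4}, and your splitting estimate in fact gives $H^{1,2}$-convergence of the partial sums, so it essentially delivers (\ref{ee2}) as a byproduct; the price is the extra care required to minimize over the orthogonal complement of an \emph{infinite}-dimensional subspace (producing an $H^{1,2}$ competitor in $V^\perp$, and extending the Euler--Lagrange identity from test functions in $V^\perp\cap H^{1,2}$ to all of $H^{1,2}$ via the splitting), a subtlety the paper sidesteps by only ever minimizing over complements of finite-dimensional spaces. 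What the paper's argument buys is the quantitative spectral decay rate; what yours buys is a softer, more self-contained completeness-of-eigenfunctions argument of the classical compact-resolvent type.
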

\begin{proof}
We first assume that $f \in H^{1, 2}(Y, \dist, \nu)$.
For all $N \in \mathbb{N}$, let $f_N:=\sum_i^N(\int_Yf\phi_i^Y\dist \nu)\phi_i^Y$ and let $g_N:=f-f_N$.
With no loss of generality, we can assume that $g_N \not \equiv 0$ for all $N$.

Then, since for all $i \le N$
\begin{align*}
\int_Yg_N\phi_i^Y\dist \nu&=\int_Yf\phi_i^Y\dist \nu-\sum_j^N\left(\int_Yf\phi_j^Y\dist \nu\right)\int_Y\phi_j^Y\phi_i^Y\dist \nu \\
&=\int_Yf\phi_i^Y\dist \nu-\int_Yf\phi_i^Y\dist \nu=0,
\end{align*}
we have $g_N \perp V_N$, where $V_N:= \mathrm{span}\,\{\phi_i^Y\}_{i=1, 2,\ldots, N}$.

On the other hand, it is easy to see that the number
$$
\lambda_{N+1}:=\inf_{h \perp V_N}\frac{\int_Y|\nabla h|^2\dist \nu}{\int_Y|h|^2\dist \nu}
$$
coincides with $\lambda_{N+1}(Y, \dist, \nu)$ (the inequality $\lambda_{N+1} \le \lambda_{N+1}(Y, \dist, \nu)$ is trivial. The converce is done by checking that $\lambda_{N+1}$ is an eigenvalue of $-\Delta$, which is similar to the proof of Lemma \ref{ww4}).

Therefore, we have $\|\nabla g_N\|_{L^{2}}^2 \ge \lambda_{N+1}(Y, \dist, \nu)\|g_N\|_{L^2}^2$.
Since
\begin{align*}
\int_Y|\nabla g_N|^2\dist \nu &= \int_Y|\nabla f|^2\dist \nu -2\int_Y\langle \nabla f, \nabla f_N\rangle \dist \nu +\int_Y|\nabla f_N|^2\dist \nu \\
&=\int_Y|\nabla f|^2\dist \nu -\int_Y|\nabla f_N|^2\dist \nu \le \int_Y|\nabla f|^2\dist \nu,
\end{align*}
we have $\|g_N\|_{L^2}^2\le (\lambda_{N+1}(Y, \dist, \nu))^{-1}\|\nabla f\|_{L^2}^2 \to 0$ as $N \to \infty$, which shows (\ref{ee}) in the case when $f \in H^{1, 2}(Y, \dist, \nu)$.

Next, let us check (\ref{ee}) for general $f \in L^2(Y, \nu)$.
Take a sequence $F_n \in H^{1, 2}(Y, \dist, \nu)$ with $F_n \to f$ in $L^2(Y, \nu)$.
Let $a_i:=\int_Yf\phi_i^Y\dist \nu$, let $a_{n, i}:=\int_YF_n\phi_i^Y\dist \nu$, let $f_N:=\sum_i^Na_i\phi_i^Y$ and let $F_{n, N}:=\sum_i^Na_{n, i}\phi_i^Y$.
For all $\epsilon>0$, there exists $n_0$ such that  $\|f-F_{n_0}\|_{L^2}<\epsilon$.
Then, there exists $N_0$ such that for all $N \ge N_0$, we have $\|F_{n_0}-F_{n_0, N}\|_{L^2}<\epsilon$.
Moreover, 
\begin{align*}
\int_Y|F_{n_0, N}-f_N|^2\dist \nu &=\sum_i^N(a_{n_0, i}-a_i)^2\\
&\le \sum_i(a_{n_0, i}-a_i)^2 \\
&= \sum_i\left(\int_Y(F_{n_0}-f_i)\phi_i^Y\dist \nu\right)^2 \le \|F_{n_0}-f\|_{L^2}^2 \le \epsilon^2,
\end{align*}
where we used the fact that for any ONS, $\{e_i\}_i$, in a Hilbert space $(H, \langle \cdot, \cdot \rangle)$, we have $|v|^2 \ge \sum_i\langle v, e_i\rangle^2$ for all $v \in H$.
Therefore, for all $N \ge N_0$,
$$
\|f-f_N\|_{L^2} \le \|f-F_{n_0}\|_{L^2}+\|F_{n_0}-F_{n_0, N}\|_{L^2}+\|F_{n_0, N}-f_N\|_{L^2} \le 3\epsilon,
$$
which completes the proof.
\end{proof}
\begin{proposition}
For all $f \in H^{1, 2}(Y, \dist, \nu)$, we have (\ref{ee2}).
\end{proposition}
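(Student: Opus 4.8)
The plan is to upgrade the $L^2$-convergence of the partial sums $f_N := \sum_{i=1}^N a_i \phi_i^Y$, with $a_i := \int_Y f\phi_i^Y\di \nu$, already furnished by Proposition \ref{ww5}, to convergence in $H^{1,2}(Y, \dist, \nu)$. Since this space is a Hilbert space, it suffices to show that $\{f_N\}_N$ is a Cauchy sequence there; its limit must then coincide with the $L^2$-limit $f$, giving (\ref{ee2}). The $L^2$-part of the Cauchy estimate, $\norm{f_N - f_M}_{L^2}^2 = \sum_{i=M+1}^N a_i^2 \to 0$, is immediate from Proposition \ref{ww5} (which in particular forces $\sum_i a_i^2 = \norm{f}_{L^2}^2 < \infty$), so the whole matter reduces to controlling the Dirichlet energies of the tails.

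First I would record the orthogonality of the eigenfunctions for the carr\'e du champ: testing the eigenfunction equation for $\phi_i^Y$ against $\phi_j^Y$ gives $\int_Y \langle \nabla \phi_i^Y, \nabla \phi_j^Y\rangle \di \nu = \lambda_i(Y, \dist, \nu)\,\delta_{ij}$, so that $\norm{\nabla(f_N - f_M)}_{L^2}^2 = \sum_{i=M+1}^N \lambda_i(Y, \dist, \nu)\,a_i^2$ for $N > M$, and in particular $\int_Y |\nabla f_N|^2\di \nu = \sum_{i=1}^N \lambda_i(Y, \dist, \nu)\,a_i^2$. The crucial quantitative input is that these tails vanish, i.e. that $\sum_i \lambda_i(Y, \dist, \nu)\,a_i^2 < \infty$. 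This is exactly the content of the Pythagorean identity already established inside the proof of Proposition \ref{ww5}: with $g_N = f - f_N$ one has $\int_Y |\nabla g_N|^2\di \nu = \int_Y |\nabla f|^2\di \nu - \int_Y |\nabla f_N|^2\di \nu$, so the nonnegativity of the left-hand side yields the Bessel-type bound $\sum_{i=1}^N \lambda_i(Y, \dist, \nu)\,a_i^2 \le \int_Y |\nabla f|^2\di \nu$ for every $N$.

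Because the partial sums $\sum_{i=1}^N \lambda_i(Y, \dist, \nu)\,a_i^2$ are nondecreasing and bounded above, they converge, so their tails $\sum_{i=M+1}^N \lambda_i(Y, \dist, \nu)\,a_i^2$ tend to $0$ as $M, N \to \infty$. Combined with the $L^2$-estimate, this shows $\{f_N\}_N$ is Cauchy in $H^{1,2}(Y, \dist, \nu)$, hence convergent there; since its $L^2$-limit is $f$, we obtain $f_N \to f$ in $H^{1,2}(Y, \dist, \nu)$, which is (\ref{ee2}). I do not expect a serious obstacle: this is the abstract completeness argument from spectral theory, and its only nontrivial ingredients, the Dirichlet orthogonality of the $\phi_i^Y$ and the finiteness $\sum_i \lambda_i(Y, \dist, \nu)\,a_i^2 < \infty$, are already packaged in the computation from the proof of Proposition \ref{ww5}. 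The single point deserving care is the identity $\int_Y \langle \nabla f, \nabla f_N\rangle \di \nu = \sum_{i=1}^N \lambda_i(Y, \dist, \nu)\,a_i^2$, which comes from testing the eigenfunction equation against $f \in H^{1,2}(Y, \dist, \nu)$ and is precisely the step used (implicitly) in Proposition \ref{ww5}.
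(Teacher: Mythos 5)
Your proof is correct, and it rests on the same two identities as the paper's own proof --- the gradient orthogonality $\int_Y\langle\nabla\phi^Y_i,\nabla\phi^Y_j\rangle\di\nu=\lambda_i(Y,\dist,\nu)\,\delta_{ij}$ and the cross-term identity $\int_Y\langle\nabla f,\nabla f_N\rangle\di\nu=\sum_{i=1}^N\lambda_i(Y,\dist,\nu)\,a_i^2=\|\nabla f_N\|_{L^2}^2$ --- but the concluding mechanism is genuinely different. The paper combines the uniform bound $\|\nabla f_N\|_{L^2}\le\|\nabla f\|_{L^2}$ (obtained from that identity via Cauchy--Schwarz) with the $L^2$-convergence $f_N\to f$ of Proposition \ref{ww5}, and then appeals to Mazur's lemma; you instead convert the same information into the Bessel-type inequality $\sum_i\lambda_i(Y,\dist,\nu)\,a_i^2\le\|\nabla f\|_{L^2}^2$, deduce that the tails $\sum_{i=M+1}^N\lambda_i(Y,\dist,\nu)\,a_i^2$ vanish, and conclude that $(f_N)_N$ is Cauchy in the complete space $H^{1,2}(Y,\dist,\nu)$, its limit being $f$ because $H^{1,2}$-convergence implies $L^2$-convergence. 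Your route buys something real: it yields strong $H^{1,2}$-convergence of the partial sums directly, with no weak-compactness step, whereas the paper's invocation of Mazur's lemma, read literally, only produces strong convergence of convex combinations of the $f_N$; upgrading that to convergence of the $f_N$ themselves needs an extra observation --- for instance that $f_N$ is the $H^{1,2}$-orthogonal projection of $f$ onto $\mathrm{span}\{\phi^Y_1,\ldots,\phi^Y_N\}$ (its $H^{1,2}$-expansion coefficients are $(1+\lambda_i)^{-1}(a_i+\lambda_i a_i)=a_i$), so it minimizes the $H^{1,2}$-distance to $f$ over that span, or alternatively that weak convergence together with convergence of norms implies strong convergence in a Hilbert space. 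The only mild caveat on your side is that the Bessel bound is quoted from inside the proof of Proposition \ref{ww5} (the Pythagorean computation for $g_N=f-f_N$) rather than from a displayed statement; since that computation is carried out there precisely under the hypothesis $f\in H^{1,2}(Y,\dist,\nu)$, which is your standing assumption, the citation is legitimate.
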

\begin{proof}
Let $f_N:=\sum_i^Na_i\phi_i^Y$, where $a_i=\int_Yf\phi_i^Y\dist \nu$.
Then, 
$$
\|\nabla f_N\|_{L^2}^2=\sum_i^N\lambda_i(Y, \dist, \nu)(a_i)^2.
$$
On the other hand,
\begin{align*}
\int_Y\langle \nabla f, \nabla f_N\rangle \dist \nu &=\sum_i^Na_i\int_Y\langle \nabla f, \nabla \phi_i^Y\rangle \dist \nu \\
&=\sum_i^Na_i\lambda_i(Y, \dist, \nu)\int_Yf\phi_i^Y\dist \nu =\sum_i^N\lambda_i(Y, \dist, \nu)(a_i)^2 =\|\nabla f_N\|_{L^2}^2.
\end{align*}
In particular, the Cauchy-Schwartz inequality yields $\|\nabla f_N\|_{L^2}^2\le \|\nabla f\|_{L^2}\|\nabla f_N\|_{L^2}$. Thus, since $\|\nabla f_N\|_{L^2}\le \|\nabla f\|_{L^2}$, we have $\sup_i \|f_N\|_{H^{1, 2}}<\infty$. Since $f_N \to f$ in $L^2(Y, \nu)$ as $N \to \infty$, Mazur's lemma shows (\ref{ee2}).
\end{proof}

\end{document}